\date{}
\definecolor{c20}{rgb}{0.,0.7,0.}
\definecolor{c30}{rgb}{0.,0.,1.}
\definecolor{c40}{rgb}{1,0.1,0.7}
\definecolor{c50}{rgb}{1,0,0}
\definecolor{c60}{rgb}{1,0.9,0.1}
\newtheorem{theorem}{Theorem}[section]
\newtheorem{lemma}{Lemma}[section]
\newtheorem{proposition}{Proposition}[section]
\newtheorem{remark}{Remark}[section]
\numberwithin{equation}{section}
\def\P{\operatorname*{P}}
\def\ST{\operatorname*{ST}}
\def\MSTD{\operatorname*{MSTD}}
\def\sign{\operatorname*{sign}}
\def\R{\operatorname*{\mathbb{R}}}
\begin{document}
\title{\textbf{Higher-order expansions of extremes from mixed skew-$t$ distribution}}
\author{$^{a}$Jingyao Hou\quad $^{b}$Xin Liao\thanks{Corresponding author. Email: liaoxin2010@163.com}\quad $^{a}$Zuoxiang Peng \\
{\small $^{a}$School of Mathematics and Statistics, Southwest University, Chongqing, 400715, China} \\
{\small $^{b}$Business School, University of Shanghai for Science and Technology, Shanghai, 200093, China}}

\maketitle
\begin{quote}
{\bf Abstract.}~~In this paper, we study the asymptotic behaviors of the extreme of mixed skew-t distribution. We considered limits on distribution and density of maximum of mixed skew-t distribution under linear and power normalization, and further derived their higher-order expansions, respectively. Examples are given to support our findings.

{\bf Keywords.}~~Mixed skew-$t$ distribution; Extreme value distribution; Higher order expansions; Power normalization; Linear normalization.
\end{quote}

\section{Introduction}
\label{sec1}
The skew-$t$ distribution due to Azzalini and Capitanio (2003) can be defined as follows. Let $Y$ and $Z$ be two independent random variables with $Y$ following $\chi^2(v)$ with degree of freedom $v$ and $Z$ being a skew-normal random variable with probability density function (pdf)
$$f_{Z}(x)=2\phi(x)\Phi(\beta x)$$
for $x\in \R$ and parameter $\beta\in \R$, where $\phi(\cdot)$ denote the standard normal pdf and $\Phi(\cdot)$ denotes the standard normal cumulative distribution function (cdf).
Define $X=Y/\sqrt{Z/v}$, then $X$ is said to have skew-$t$ distribution, written as $X\sim \ST_v(\beta)$
for short. Azzalini and Capitanio (2003)  showed that the pdf of $\ST_v(\beta)$ is
\begin{eqnarray}\label{eq1.1}
f(x)=2t_v(x)T_{v+1}\left(\beta x\sqrt{\frac{v+1}{x^2+v}}\right),
\end{eqnarray}
where $t_v(\cdot)$ is the pdf of the standard Student's $t$ distribution with degree of freedom $v$, and $T_{v+1}(\cdot)$
is the cdf of the standard Student's $t$ distribution with degree of freedom $v+1$. Note that
\begin{eqnarray}\label{eq1.2}
t_v(x)=C_v\left(1+\frac{x^2}{v}\right)^{-\frac{v+1}{2}},
\end{eqnarray}
where $x\in \R$ and $C_v=\frac{\Gamma\left(\frac{v+1}{2}\right)}{\Gamma\left(\frac{v}{2}\right)\sqrt{v\pi}}$. It follows from \eqref{eq1.1} and \eqref{eq1.2} that $f\in RV_{-\alpha-1}$ implying $F\in D(\Phi_{\alpha})$, i.e, there exist norming constant $a_{n}>0$ such that
\begin{equation}\label{eq1.3}
\lim_{n\to\infty}F^{n}(a_{n}x)=\Phi_{\alpha}(x)=\exp(-x^{-\alpha}), \;\; x>0,
\end{equation}
where $F$ is the cdf of skew-t distribution. Here, $g\in RV_{\beta}$ means $\lim_{t\to\infty}g(tx)/g(t)=x^{\beta}$ for  $x>0$. Further extremal properties such as distributional expansions of extremes from skew-t distribution, see Peng et al. (2016).

Nowadays finite mixed distribution has received widespread research.
Peel and Mclachlan (2000) considered
a robust approach by modelling atypical observations by a mixture of Student-t distributions. Frigessi et al.(2002) proposed a dynamic mixture
approach to estimate the severity distributions. Dellaportas and Papageorgiou (2006) presented full Bayesian analysis of finite mixtures of multivariate normals with unknown number of components.
Cabral et al. (2008) gave a Bayesian approach for modeling heterogeneous data and estimated
multimodal densities using mixtures of skew student-t-normal distributions.
Sattayatham and Talangtam (2012) modeled
motor insurance claims data from Thailand by using a mixture of log-normal distributions.
Lin et al. (2007) proposed a robust mixture framework based on the skew-t distribution,
and provided EM-type algorithms for iteratively computing maximum likelihood estimates.
Fr\"{u}hwrith-Schnatter and Pyne (2010) investigated Bayesian inference for finite mixtures of skew-normal and skew-t distributions, and applied them to modelling non-Gaussian cell populations.
Ho and Lin (2010) considered a robust linear mixed skew-t models with application to schizophrenia data. A two-component skew-t mixture model is used to analyze freeway speed
data characteristics by Zou and Zhang (2011). For more
 studies related to the mixed multivariate
skew-t distribution, we refer to Lin (2010), Vrbik and McNicholas (2012), Lee and McLachlan (2013, 2014).

The objective of this paper is to study the asymptotic behaviors of extremes of finite mixed skew-t distribution (shortened by $\MSTD$) under linear and power normalization, respectively. The finite $\MSTD$ is defined as follows. Let $X_1,X_2,\cdots,X_r$ be independent random variables with $X_i\sim \ST_{v_i}(\beta_i)$, where $v_i>0$ and $\beta_i\in \R$ for $i=1,2,\cdots,r.$ Without loss of generality we suppose that
$v_1<v_2<\cdots<v_r$. Define a new random variable $T$ by
\begin{eqnarray}\label{eq1.3}
T=\left\{
\begin{aligned}
&X_1, & \quad \P(T=X_1)=p_1& \\
&X_2, & \quad \P(T=X_2)=p_2& \\
&\vdots & \vdots \quad & \\
&X_r, & \quad \P(T=X_r)=p_r&,
\end{aligned}
\right.
\end{eqnarray}
where $p_i>0$($1\le i\le r$) satisfying $\sum_{i=1}^rp_i=1$. Then $T$ is said to have finite $\MSTD$ with $r$ components.
Denoting by $F_{v,\beta}(x)$ the cdf of a random variable $X\sim \ST_{v}(\beta)$ and $F(x)$ the cdf of the random variable $T$, we can easily get
\begin{eqnarray}\label{eq1.4}
F(x)=p_1F_{v_1,\beta_1}(x)+p_2F_{v_2,\beta_2}(x)+\cdots+p_rF_{v_r,\beta_r}(x).
\end{eqnarray}

Contents of this paper are organized as follows. In Section \ref{sec2}, expansion of the distributional tail of $\MSTD$  shows that extreme value distribution from $\MSTD$ sample is Fr\'{e}chet distribution under given linear normalization, implying that under power normalization the extreme value distribution from $\MSTD$ is $\Phi_{1}(x)$.  Higher-order expansions of the cdf and the pdf of extremes from $\MSTD$ are given in Section \ref{sec3}. Numerate analysis provided in Section \ref{sec4} compare the asymptotic behaviors under different normalization.

\section{Preliminaries}\label{sec2}
In this section, we provide some primary results related to $\ST$ and $\MSTD$. The first one is from Peng et al. (2016).
\begin{lemma}
\label{Lemma2.1}
Let $F_{v,\beta}(x)$ denote the cdf of the skew-$t$ distribution. For large $x$, we have
\begin{eqnarray}\label{eq2.1}
1-F_{v,\beta}(x)=2C_vv^\frac{v-1}{2}T_{v+1}(\beta\sqrt{v+1})x^{-v}\left(1+A_1x^{-2}+A_2x^{-4}+O(x^{-6})\right),
\end{eqnarray}
where
\[A_1=-\frac{C_{v+1}v^2\sqrt{v+1}\beta(1+\beta^2)^{-\frac{v+2}{2}}}{2T_{v+1}(\beta\sqrt{v+1})(v+2)}-\frac{v^2(v+1)}{2(v+2)}\]
and
\begin{eqnarray*}
A_2&=&
\frac{C_{v+1}\sqrt{v+1}\beta(1+\beta^2)^{-\frac{v+2}{2}}}{T_{v+1}(\beta\sqrt{v+1})}
\left[\frac{3v^2}{8}+\frac{v^3(v-1)}{4(v+2)}-\frac{3v^2}{(v+2)(v+4)}-\frac{(v+2)\beta^2v^3}{8(v+4)(1+\beta^2)} \right]\\
& &+\frac{v^2(v^2-1)}{8}+\frac{3v^2}{(v+2)(v+4)}+\frac{v^2(v-1)}{2(v+2)}.
\end{eqnarray*}
\end{lemma}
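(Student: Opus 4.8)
\emph{Outline.}~ The plan is to derive \eqref{eq2.1} by integrating term by term an asymptotic expansion of the skew-$t$ density from \eqref{eq1.1}--\eqref{eq1.2}. Write $1-F_{v,\beta}(x)=\int_x^{\infty}f(u)\,du$, where $f(u)=2t_v(u)T_{v+1}(g(u))$ with $g(u)=\beta u\sqrt{(v+1)/(u^2+v)}$, and expand the two factors of $f$ separately for large $u$. For the first factor, \eqref{eq1.2} gives $t_v(u)=C_vv^{(v+1)/2}u^{-(v+1)}(1+vu^{-2})^{-(v+1)/2}$, and $(1+vu^{-2})^{-(v+1)/2}$ is expanded through its $u^{-4}$ term by the generalized binomial series. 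For the second factor, observe that $g(u)=c\,(1+vu^{-2})^{-1/2}$ with $c:=\beta\sqrt{v+1}$, so that $g(u)-c=c\bigl(-\tfrac v2u^{-2}+\tfrac{3v^2}{8}u^{-4}+O(u^{-6})\bigr)=O(u^{-2})$; a second-order Taylor expansion of $T_{v+1}$ about $c$ then gives
\[
T_{v+1}(g(u))=T_{v+1}(c)+t_{v+1}(c)(g(u)-c)+\tfrac12\,t_{v+1}'(c)(g(u)-c)^2+O\bigl((g(u)-c)^3\bigr),
\]
where from \eqref{eq1.2} one reads $t_{v+1}(c)=C_{v+1}(1+\beta^2)^{-(v+2)/2}$ and $t_{v+1}'(c)=-C_{v+1}(v+2)\beta(v+1)^{-1/2}(1+\beta^2)^{-(v+4)/2}$, the latter being the source of the factor $(1+\beta^2)^{-(v+4)/2}$ that appears in $A_2$.

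Multiplying the two expansions and factoring out $T_{v+1}(c)$ produces
\[
f(u)=2C_vv^{(v+1)/2}T_{v+1}(c)\,u^{-(v+1)}\bigl(1+B_1u^{-2}+B_2u^{-4}+O(u^{-6})\bigr)
\]
with explicit constants $B_1,B_2$ built from $v$, $\beta$ and the ratio $t_{v+1}(c)/T_{v+1}(c)$. Integrating over $(x,\infty)$ via $\int_x^{\infty}u^{-(v+1)-2k}\,du=(v+2k)^{-1}x^{-(v+2k)}$ and pulling out $x^{-v}$ gives precisely the shape of \eqref{eq2.1} with $A_k=\frac{v}{v+2k}\,B_k$, and the leading coefficient is $2C_vv^{(v-1)/2}T_{v+1}(c)$ as claimed. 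Term-by-term integration of the remainder is legitimate because the error in the density expansion is $O(u^{-(v+1)-6})$ uniformly for large $u$: the binomial remainder is $O(u^{-6})$, and since $T_{v+1}$ is smooth near $c$ while $g(u)-c=O(u^{-2})$, the Taylor remainder is $O((g(u)-c)^3)=O(u^{-6})$; integrating such a term against $u^{-(v+1)}$ yields the $O(x^{-6})$ inside the bracket.

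The only genuine obstacle is the algebraic bookkeeping for $A_2$: three separate sources contribute at order $u^{-4}$, namely the $u^{-4}$ coefficient $\tfrac{v^2(v+1)(v+3)}{8}$ of the binomial series for $t_v$, the cross term between the $u^{-2}$ parts of the two factors, and the order-$u^{-4}$ terms of the Taylor expansion of $T_{v+1}$ (which at second order involve $t_{v+1}'(c)$ and hence $\beta^2/(1+\beta^2)$ after writing $(1+\beta^2)^{-(v+4)/2}=(1+\beta^2)^{-(v+2)/2}/(1+\beta^2)$). One collects these into $B_2$, multiplies by $\frac{v}{v+4}$, and simplifies the resulting rational functions of $v$ — for instance checking that the purely polynomial ($\beta$-free) part reduces to $\frac{v^3(v+1)(v+3)}{8(v+4)}$ — to reach the displayed $A_2$. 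The computation of $A_1$ is the same argument truncated one order earlier and is immediate by comparison.
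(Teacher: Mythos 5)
Your derivation is correct: the paper itself gives no proof of this lemma (it is imported from Peng et al.\ (2016)), and your expansion of $t_v(u)$ by the binomial series together with the second-order Taylor expansion of $T_{v+1}$ about $c=\beta\sqrt{v+1}$, followed by termwise integration with $A_k=\tfrac{v}{v+2k}B_k$, reproduces the stated leading constant and the coefficients $A_1$ and $A_2$ exactly (including the $\beta$-free part $\tfrac{v^3(v+1)(v+3)}{8(v+4)}$ and the $\beta^2/(1+\beta^2)$ term coming from $t_{v+1}'(c)$). This is essentially the standard argument one would expect behind the cited result, so there is nothing further to reconcile.
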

For the distibutional tail behavior of $\MSTD$, we have the following results.
\begin{lemma}
\label{Lemma2.2}\label{lemma2.2}
Let F(x) be the cdf of random variable $T$, for large $x$ we have
\begin{eqnarray*}
1-F(x)&=&2p_1C_{v_1}v_{1}^{\frac{v_1-1}{2}}T_{v_1+1}(\beta_1\sqrt{v_1+1})x^{-v_1}(1+A_1x^{-2}+A_2x^{-4}+A_3x^{-(v_2-v_1)}\\
&&+A_4x^{-(v_2-v_1)-2}+A_5x^{-(v_3-v_1)}+O(x^{-\eta}))
\end{eqnarray*}
where $\eta=\min\{6,v_4-v_1,v_2-v_1+4,v_3-v_1+2\}$ and
\begin{eqnarray*}
&&A_1=-\frac{C_{v_1+1}v_{1}^{2}\sqrt{v_1+1}\beta_1(1+\beta_{1}^{2})^{-\frac{v_1+2}{2}}}{2T_{v_1+1}(\beta_1\sqrt{v_1+1})(v_1+2)}-\frac{v_{1}^{2}(v_1+1)}
{2(v_1+2)};\\
&&A_2=\frac{C_{v_1+1}\sqrt{v_1+1}\beta_1(1+\beta_1^2)^{-\frac{v_1+2}{2}}}{T_{v_1+1}(\beta_1\sqrt{v_1+1})}
\left[\frac{v_1^3(v_1-1)}{4(v_1+2)} +\frac{3v_{1}^2}{8}-\frac{3v_{1}^2}{(v_1+2)(v_1+4)}-\frac{(v_1+2)\beta_1^2v_1^3}{8(v_1+4)(1+\beta_1^2)} \right]\\
&&+\frac{v_1^2(v_1^2-1)}{8}+\frac{3v_1^2}{(v_1+2)(v_1+4)}+\frac{v_1^2(v_1-1)}{2(v_1+2)};\\
&&A_3=\frac{p_2C_{v_2}v_2^{\frac{v_2-1}{2}}T_{v_2+1}(\beta_2\sqrt{v_2+1})}{p_1C_{v_1}v_1^{\frac{v_1-1}{2}}T_{v_1+1}(\beta_1\sqrt{v_1+1})};\\
&&A_4=-\frac{p_2C_{v_2}v_2^{\frac{v_2-1}{2}}T_{v_2+1}(\beta_2\sqrt{v_2+1})}{p_1C_{v_1}v_1^{\frac{v_1-1}{2}}T_{v_1+1}(\beta_1\sqrt{v_1+1})}
\left(\frac{C_{v_2+1}v_2^{2}\sqrt{v_2+1}\beta_2(1+\beta_2^2)^{-\frac{v_2+2}{2}}}{2(v_2+2)T_{v_2+1}(\beta_2\sqrt{v_2+1})}+\frac{v_2^2(v_2+1)}{2(v_2+2)}\right);\\
&&A_5=\frac{p_3C_{v_3}v_3^{\frac{v_3-1}{2}}T_{v_3+1}(\beta_3\sqrt{v_3+1})}{p_1C_{v_1}v_1^{\frac{v_1-1}{2}}
T_{v_1+1}(\beta_1\sqrt{v_1+1})}.
\end{eqnarray*}
\end{lemma}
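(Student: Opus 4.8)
The plan is to use the linearity of the tail decomposition together with the single-component expansion of Lemma \ref{Lemma2.1}. By \eqref{eq1.4} we have
\begin{equation*}
1-F(x)=\sum_{i=1}^{r}p_i\bigl(1-F_{v_i,\beta_i}(x)\bigr),
\end{equation*}
so the problem reduces to expanding each summand and adding them up. First I would write Lemma \ref{Lemma2.1}, applied to the $i$-th component, in the form
\begin{equation*}
1-F_{v_i,\beta_i}(x)=2C_{v_i}v_i^{\frac{v_i-1}{2}}T_{v_i+1}(\beta_i\sqrt{v_i+1})\,x^{-v_i}\Bigl(1+A_1^{(i)}x^{-2}+A_2^{(i)}x^{-4}+O(x^{-6})\Bigr),
\end{equation*}
where $A_1^{(i)},A_2^{(i)}$ denote the quantities $A_1,A_2$ of Lemma \ref{Lemma2.1} evaluated at $(v,\beta)=(v_i,\beta_i)$; since $T_{v_i+1}(\beta_i\sqrt{v_i+1})\in(0,1)$, no denominator appearing below vanishes. (We treat the generic case $r\ge 4$; for smaller $r$, or when some of the exponents below coincide, the expansion truncates or terms merge in the obvious way.)

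Next I would pull out the factor attached to the slowest-decaying term, $L_1:=2p_1C_{v_1}v_1^{\frac{v_1-1}{2}}T_{v_1+1}(\beta_1\sqrt{v_1+1})\,x^{-v_1}$, obtaining
\begin{align*}
1-F(x)=L_1\Biggl[\,&1+A_1^{(1)}x^{-2}+A_2^{(1)}x^{-4}+O(x^{-6})\\
&+\sum_{i=2}^{r}\frac{p_iC_{v_i}v_i^{\frac{v_i-1}{2}}T_{v_i+1}(\beta_i\sqrt{v_i+1})}{p_1C_{v_1}v_1^{\frac{v_1-1}{2}}T_{v_1+1}(\beta_1\sqrt{v_1+1})}\,x^{-(v_i-v_1)}\bigl(1+A_1^{(i)}x^{-2}+O(x^{-4})\bigr)\Biggr],
\end{align*}
and then read off the contributions order by order using $v_1<v_2<\dots<v_r$. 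The $i=1$ block gives $1+A_1x^{-2}+A_2x^{-4}+O(x^{-6})$ with $A_1=A_1^{(1)}$, $A_2=A_2^{(1)}$, matching the displayed formulas. The $i=2$ block gives $A_3x^{-(v_2-v_1)}+A_4x^{-(v_2-v_1)-2}+O(x^{-(v_2-v_1)-4})$ with $A_3$ the stated ratio and $A_4=A_3A_1^{(2)}$, which is exactly the stated expression. The $i=3$ block gives $A_5x^{-(v_3-v_1)}+O(x^{-(v_3-v_1)-2})$ with $A_5$ as stated, and the finitely many blocks $i\ge 4$ together contribute $O\bigl(x^{-(v_4-v_1)}\bigr)$ because each is $O(x^{-(v_i-v_1)})$ with $v_i\ge v_4$. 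Combining all the remainders, the bracket equals $1+A_1x^{-2}+A_2x^{-4}+A_3x^{-(v_2-v_1)}+A_4x^{-(v_2-v_1)-2}+A_5x^{-(v_3-v_1)}+O(x^{-\eta})$ with $\eta=\min\{6,\,v_2-v_1+4,\,v_3-v_1+2,\,v_4-v_1\}$, and multiplying by $L_1$ gives the claim.

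There is no deep obstacle here: the argument is essentially linearity of the survival function plus Lemma \ref{Lemma2.1}. The only step requiring care is the bookkeeping — tracking which power of $x^{-1}$ each cross term carries once the factor $x^{-v_1}$ has been removed, verifying that $A_4=A_3A_1^{(2)}$ reproduces the stated formula after expanding $A_1^{(2)}$, and confirming that the minimum defining $\eta$ genuinely dominates every discarded term. If two of the listed exponents happen to coincide (e.g.\ $v_2-v_1=2$), the corresponding monomials should be merged; this affects only the cosmetic form of the expansion, not its validity as an asymptotic identity, so one may assume the exponents are pairwise distinct.
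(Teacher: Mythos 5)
Your proposal is correct and follows essentially the same route as the paper's own proof: decompose $1-F(x)=\sum_i p_i(1-F_{v_i,\beta_i}(x))$, apply Lemma \ref{Lemma2.1} componentwise, factor out the dominant term $2p_1C_{v_1}v_1^{\frac{v_1-1}{2}}T_{v_1+1}(\beta_1\sqrt{v_1+1})x^{-v_1}$, and collect the surviving orders, with the $i\ge 4$ blocks absorbed into $O(x^{-(v_4-v_1)})$. Your identification $A_4=A_3A_1^{(2)}$ matches the stated formula, and your remarks on coinciding exponents and the case $r<4$ are a harmless (indeed slightly more careful) addition to what the paper does.
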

\begin{proof}
To simplify the proof, we first define two functions $A_1(\cdot,\cdot)$ and $A_2(\cdot,\cdot)$ as following.
For $i=1,\cdots,r,$ define
\begin{eqnarray*}
A_1(v_i,\beta_i)=-\frac{C_{v_i+1}v_i^2\sqrt{v_i+1}\beta_i(1+\beta_i^2)^{-\frac{v_i+2}{2}}}{2(v_i+2)T_{v_i+1}(\beta_i\sqrt{v_i+1})}
-\frac{v_i^2(v_i+1)}{2(v_i+2)}
\end{eqnarray*}
and
\begin{eqnarray*}
A_2(v_i,\beta_i)&=&
\frac{C_{v_i+1}\sqrt{v_i+1}\beta(1+\beta_i^2)^{-\frac{v_i+2}{2}}}{T_{v_i+1}(\beta\sqrt{v_i+1})}\\
&&\times \left[\frac{v_i^3(v_i-1)}{4(v_i+2)} +\frac{3v_{i}^2}{8}-\frac{3v_{i}^2}{(v_i+2)(v_i+4)}-\frac{(v_i+2)\beta_i^2v_i^3}{8(v_i+4)(1+\beta_i^2)} \right]\\
&&+\frac{v_i^2(v_i^2-1)}{8}+\frac{3v_i^2}{(v_i+2)(v_i+4)}+\frac{v_i^2(v_i-1)}{2(v_i+2)}.
\end{eqnarray*}
It follows from \eqref{eq2.1} that
\begin{eqnarray*}
1-F(x)&=&\sum_{i=1}^r p_i\big(1-F_{v_i,\beta_i}(x)\big)\\
&=&\sum_{i=1}^r2p_{i}C_{v_i}v_i^{\frac{v_i-1}{2}}T_{v_i+1}(\beta_i\sqrt{v_i+1})x^{-v_i}\big(1+A_1(v_i,\beta_i)x^{-2}+A_2(v_i,\beta_i)x^{-4}+O(x^{-6})\big)\\
&=&2p_{1}C_{v_1}v_1^{\frac{v_1-1}{2}}T_{v_1+1}(\beta_1\sqrt{v_1+1})x^{-v_1}\left(1+A_1x^{-2}+A_2x^{-4}+A_3x^{-(v_2-v_1)}\right.\\
&&\left.+A_4(v_2,\beta_2)x^{-(v_2-v_1)-2}+A_5x^{-(v_3-v_1)}+O(x^{-\eta^{'}})\right)\\
&&+\sum_{i=4}^r2p_iC_{v_i}v_i^{\frac{v_i-1}{2}}
T_{v_i+1}(\beta_i\sqrt{v_i+1})x^{-v_i}\big(1+A_1(v_i,\beta_i)x^{-2}+A_2(v_i,\beta_i)x^{-4}+O(x^{-6})\big),
\end{eqnarray*}
where $\eta^{'}=\min\{6,v_2-v_1+4,v_3-v_1+2\}$. The proof is complete by noting that the last trem is the order of $x^{-v_4}$.
\end{proof}

\begin{proposition}
\label{prop2.1}
Let $\{T_n,n\le 1\}$ be a sequence of independent and identical distribution random variables with marginal cdf $F(x)$. Let $M_n=\max_{1\le k \le n}\{T_k\}$ denote the partial maximum. Then we have
$$\lim_{n\to\infty}\P(M_{n}\le a_{n}x)=\lim_{n\to \infty}F^n(a_n x)=\exp(-x^{-v_1}),$$
where
\begin{eqnarray}\label{eq2.2}
a_n=(2p_1C_{v_1}T_{v_1+1}(\eta_1\sqrt{v_1+1}))^{\frac{1}{v_1}}v_1^{\frac{v_1-1}{2v_1}}n^{\frac{1}{v_1}}.
\end{eqnarray}
\end{proposition}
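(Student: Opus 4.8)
The plan is to invoke the classical criterion for convergence to the Fr\'echet law: for norming constants $a_n>0$ and each fixed $x>0$, one has $F^n(a_nx)\to\exp(-x^{-v_1})$ if and only if $n\bigl(1-F(a_nx)\bigr)\to x^{-v_1}$. This equivalence is standard in extreme value theory and follows at once from $n\log F(a_nx)\sim -n\bigl(1-F(a_nx)\bigr)$, which is valid here because $a_n\to\infty$ forces $1-F(a_nx)\to 0$. Thus it suffices to verify the tail relation $n\bigl(1-F(a_nx)\bigr)\to x^{-v_1}$.

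Set $K:=2p_1C_{v_1}v_1^{(v_1-1)/2}T_{v_1+1}(\beta_1\sqrt{v_1+1})$, the leading coefficient in Lemma \ref{lemma2.2}. That lemma gives, as $x\to\infty$,
\begin{eqnarray*}
1-F(x)=Kx^{-v_1}\Bigl(1+A_1x^{-2}+A_2x^{-4}+A_3x^{-(v_2-v_1)}+A_4x^{-(v_2-v_1)-2}+A_5x^{-(v_3-v_1)}+O(x^{-\eta})\Bigr).
\end{eqnarray*}
Because $v_1<v_2<v_3$ and $\eta>0$, every power of $x$ inside the parentheses is strictly negative, so the bracketed factor converges to $1$ as $x\to\infty$; in particular $1-F\in RV_{-v_1}$ and $F\in D(\Phi_{v_1})$.

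Substituting $x\mapsto a_nx$ with $a_n$ as in \eqref{eq2.2}, a direct computation (raising \eqref{eq2.2} to the power $v_1$ and cancelling) gives $a_n^{v_1}=Kn$. Hence
\begin{eqnarray*}
n\bigl(1-F(a_nx)\bigr)=\frac{nK}{a_n^{v_1}}\,x^{-v_1}\bigl(1+o(1)\bigr)=x^{-v_1}\bigl(1+o(1)\bigr)\longrightarrow x^{-v_1},
\end{eqnarray*}
where the $o(1)$ collects the correction terms of Lemma \ref{lemma2.2}, each of which vanishes as $a_nx\to\infty$ (recall $a_n\to\infty$ and $x>0$ is fixed). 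Combining this with the equivalence from the first paragraph yields $\P(M_n\le a_nx)=F^n(a_nx)\to\exp(-x^{-v_1})$, which is the assertion.

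Given Lemma \ref{lemma2.2}, the proof is essentially bookkeeping; the only steps meriting attention are the algebraic identity $a_n^{v_1}=Kn$ and the observation that all residual exponents of $x$ in the expansion are negative --- this last point being precisely the reason the component with the smallest degree of freedom $v_1$ dictates the tail and hence the index $-v_1$ of the limiting Fr\'echet distribution. I do not anticipate any real obstacle; the substantive content sits entirely in Lemma \ref{lemma2.2}.
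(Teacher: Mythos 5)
Your proof is correct, and it rests on exactly the same essential input as the paper's: the leading-order tail asymptotic $1-F(x)\sim Kx^{-v_1}$ with $K=2p_1C_{v_1}v_1^{(v_1-1)/2}T_{v_1+1}(\beta_1\sqrt{v_1+1})$ from Lemma \ref{lemma2.2}, together with the observation that the chosen $a_n$ satisfies $a_n^{v_1}=Kn$. The only difference is in packaging. You verify the Poisson-type criterion $n\bigl(1-F(a_nx)\bigr)\to x^{-v_1}$ directly via the exact identity $a_n^{v_1}=Kn$, which keeps the argument self-contained apart from the standard equivalence between that criterion and $F^n(a_nx)\to\exp(-x^{-v_1})$. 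The paper instead first records that $1-F$ is regularly varying of index $-v_1$, concludes $F\in D(\Phi_{v_1})$ from Resnick's Proposition 1.11, then defines $t_n$ by $n(1-F(t_n))=1$, shows $t_n\sim a_n$, and appeals to Khintchine's theorem to justify replacing $t_n$ by the explicit $a_n$. Your route is marginally more elementary (no Khintchine, no separate domain-of-attraction step); the paper's makes the membership $F\in D(\Phi_{v_1})$ explicit as a standalone fact. One cosmetic remark: the $\eta_1$ appearing in the statement's formula for $a_n$ is a typo for $\beta_1$, and your computation correctly uses $\beta_1$ throughout.
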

\begin{proof}
From Lemma \ref{Lemma2.2}, for large $x$ we have
\begin{eqnarray}\label{eq2.3}
1-F(x)\sim 2p_1C_{v_1}v_1^{\frac{v_1-1}{2}}T_{v_1+1}(\beta_1\sqrt{v_1+1})x^{-v_1},
\end{eqnarray}
which implies
$$\lim_{t\to \infty}\frac{1-F(tx)}{1-F(t)}=x^{-v_1}$$
for $x>0$. Thus by Proposition 1.11 in Resnick(1987), we have $F(x)\in D(\Phi_{v_1})$. The remainder is to compute the norming constant $a_n$.

Since the cdf $F(x)$ is continuous for $x\in \R$, there exist $t_n$ for each integer $n\ge 2$ such that
$$n(1-F(t_n))=1.$$
Hence by \eqref{eq2.3} we have
$$2np_1C_{v_1}v_1^{\frac{v_1-1}{2}}T_{v_1+1}(\beta_1\sqrt{v_1+1})t_n^{-v_1}\to 1$$
as $n\to \infty$, implying
$$t_n=n^{\frac{1}{v_1}}(2p_1C_{v_1}T_{v_1+1}(\beta_1\sqrt{v_1+1}))^{\frac{1}{v_1}}v_1^{\frac{v_1-1}{2v_1}}(1+o(1)).$$
Let $a_n=n^{\frac{1}{v_1}}(2p_1C_{v_1}T_{v_1+1}(\beta_1\sqrt{v_1+1}))^{\frac{1}{v_1}}v_1^{\frac{v_1-1}{2v_1}}$, and the result follows by  Khintchine Theorem in Leadbetter et al. (1983).
\end{proof}

To end this section, we provide the limiting distribution of maximum of MSTD
under power normalization. For the extreme value distributions under power normalization, we refer the reader to the original work of Pancheva (1985). By Theorem 3.1 in Mohan and Ravi (1993) and Proposition \ref{prop2.1}, we have the following result.

\begin{proposition}\label{prop2.2}
Let $F$ be the cdf of $\MSTD$. With $\alpha_n=a_n$ and $\beta_n=1/v_1$ we have
\begin{equation}
\lim_{n\to \infty}F^n(\alpha_n|x|^{\beta_{n}}\sign(x))=\Phi_1(x).
\end{equation}
\end{proposition}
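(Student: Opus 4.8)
The plan is to obtain the statement directly from Proposition~\ref{prop2.1}, reading it through the transfer theorem of Mohan and Ravi (1993) that identifies the linear max-domain of attraction of a Fr\'echet law with the power max-domain of attraction of $\Phi_1$. First I would recall that Proposition~\ref{prop2.1}, with $a_n$ as in \eqref{eq2.2}, already gives
\begin{equation*}
\lim_{n\to\infty}F^{n}(a_{n}y)=\Phi_{v_1}(y)=\exp(-y^{-v_1}),\qquad y>0,
\end{equation*}
and that for $y\le 0$ one trivially has $F^{n}(a_{n}y)\to 0=\Phi_{v_1}(y)$: when $y<0$ the factor $a_{n}\to\infty$ forces $a_{n}y\to-\infty$, and when $y=0$ we use $F(0)\in(0,1)$.

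Next I would split according to the sign of $x$. For $x>0$ the power-normalized argument simplifies to $\alpha_{n}|x|^{\beta_{n}}\sign(x)=a_{n}x^{1/v_1}$, so applying the displayed convergence at the positive point $y=x^{1/v_1}$ yields
\begin{equation*}
F^{n}\big(\alpha_{n}|x|^{\beta_{n}}\sign(x)\big)=F^{n}\big(a_{n}x^{1/v_1}\big)\longrightarrow\exp\big(-(x^{1/v_1})^{-v_1}\big)=\exp(-x^{-1})=\Phi_{1}(x).
\end{equation*}
For $x<0$ we have $\alpha_{n}|x|^{\beta_{n}}\sign(x)=-a_{n}|x|^{1/v_1}\to-\infty$, hence $F^{n}\big(\alpha_{n}|x|^{\beta_{n}}\sign(x)\big)\to 0=\Phi_{1}(x)$; and for $x=0$ the argument is $0$, so $F^{n}(0)\to 0=\Phi_{1}(0)$ since $F(0)\in(0,1)$. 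Combining the three cases gives the claim. Equivalently, one observes that $F$ lies in the linear domain of attraction $D(\Phi_{v_1})$ with norming constants $a_{n}$, and quotes Theorem~3.1 of Mohan and Ravi (1993), which states precisely that this is equivalent to $F$ belonging to the power domain of attraction of $\Phi_{1}$ with power-norming constants $\alpha_{n}=a_{n}$ and $\beta_{n}=1/v_1$.

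I do not anticipate a genuine obstacle: the core of the argument is the substitution $y=x^{1/v_1}$ in a limit that is already established. The only points deserving a line of care are the behavior on $(-\infty,0]$, where the Fr\'echet limit is degenerate and the power transform pushes the argument to $-\infty$, and checking that the norming constants furnished by Proposition~\ref{prop2.1} are exactly those required by the power-normalization convention $x\mapsto\alpha_{n}|x|^{\beta_{n}}\sign(x)$ of Pancheva (1985) and Mohan and Ravi (1993), so that the cited transfer theorem applies verbatim.
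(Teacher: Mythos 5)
Your argument is correct and is essentially the paper's: the paper obtains Proposition~\ref{prop2.2} by combining Proposition~\ref{prop2.1} with Theorem~3.1 of Mohan and Ravi (1993), exactly as in your closing observation. Your explicit substitution $y=x^{1/v_1}$ for $x>0$ and the separate treatment of $x\le 0$ simply spell out what that citation hides, so no further comment is needed.
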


\section{Higher-order expansions of extremes under different normalization}\label{sec3}
In this section, we consider the higher-order expansions of the cdf and the pdf of extremes under linear and power normalization, respectively. To simplify our result, we first introduce two indicative functions $I(\cdot,\cdot,\cdot)$ and $J(\cdot,\cdot)$ such that
\begin{eqnarray*}
I(v_1\in B_1,v_2 \in B_2,v_3\in B_3)=\left\{
\begin{aligned}
&1, &\quad \mbox{if}\ v_1\in B_1,\ v_2\in B_2\ \mbox{and}\ v_3\in B_3&\\
&0, &\quad \mbox{otherwise},&
\end{aligned}
\right.
\end{eqnarray*}
and
\begin{eqnarray*}
J(v_1\in B_1,v_2 \in B_2)=\left\{
\begin{aligned}
&1, &\quad \mbox{if}\ v_1\in B_1,\ \mbox{and}\ v_2\in B_2,\ &\\
&0, &\quad \mbox{otherwise}.&
\end{aligned}
\right.
\end{eqnarray*}
where $B_1,B_2$ and $B_3$ are intervals or sets.
\begin{theorem}\label{thm3.1}
For the normalizing constant $a_n$ given by \eqref{eq2.2}, we have the following results.
\begin{itemize}
\item[(i).]~When $0<v_1<2$ and $v_2>2v_1$, set $\gamma_1=\min\{2,v_2-v_1,2v_1\}-v_1$, then
$$a_n^{\gamma_1}\Big[a_n^{v_1}(F^n(a_{n}x)-\Phi_{v_1}(x))-k_1(x)\Phi_{v_1}(x)\Big]\to \omega_1(x)\Phi_{v_1}(x),$$
where
$$k_1(x)=-p_1C_{v_1}v_1^{\frac{v_1-1}{2}}T_{v_1+1}(\beta_1\sqrt{v_1+1})x^{-2v_1}$$
and
\begin{eqnarray*}
\omega_1(x)
&=&-\Big(J(0< v_1 <1,2v_1<v_2\le 3v_1)+J(1\le v_1<2,2v_1<v_2\le v_1+2)\Big)A_3x^{-v_2}\\
&&-J(0< v_1 \le 1,v_2\ge 3v_1) \left(\frac{4}{3}p_1^2C_{v_1}^{2}v_{1}^{v_1-1}T_{v_1+1}^{2}(\beta_1\sqrt{v_1+1})x^{-3v_1}
-\frac{k_1^2(x)}{2}\right)\\
&&-J(1\le v_1<2,v_2\ge v_1+2)A_1x^{-v_1-2}.
\end{eqnarray*}
\item[(ii).]~When $v_1>2$ and $v_2>v_1+2$, set $\gamma_2=\min\{4,v_2-v_1,v_1\}-2,$ we have
$$a_n^{\gamma_2}\Big[a_{n}^{2}(F^{n}(a_nx)-\Phi_{v_1}(x))-k_2(x)\Phi_{v_1}(x)\Big]\to \omega_2(x)\Phi_{v_1}(x),$$
where
$$k_2(x)=-A_1x^{-v_1-2}$$
and
\begin{eqnarray*}
\omega_2(x)
&=&-J(v_1\ge 4,v_2\ge v_1+4)\left(A_2x^{-v_1-4}-\frac{k_2^2(x)}{2}\right)-(J(2<v_1<4,v_1+2<v_2\le 2v_1)\\
&&+J(v_1\ge4,v_1+2<v_2\le v_1+4))A_3x^{-v_2}-J(2<v_1\le 4,v_2\ge 2v_1)\\
&&\cdot p_1C_{v_1}v_1^{\frac{v_1-1}{2}}T_{v_1+1}(\beta_1\sqrt{v_1+1})x^{-2v_1}.
\end{eqnarray*}
\item[(iii).]~When $0<v_1<v_2<\min\{2v_1,v_1+2\}$, set $\gamma_3=\min\{2,v_1,2(v_2-v_1),v_3-v_1\}-v_2+v_1,$ then
\begin{eqnarray*}
a_n^{\gamma_3}\Big[a_n^{v_2-v_1}(F^{n}(a_nx)-\Phi_{v_1}(x))-k_3(x)\Phi_{v_1}(x)\Big ]
\to \omega_3(x)\Phi_{v_1}(x),
\end{eqnarray*}
where
$$k_3(x)=-A_3x^{-v_2}$$
and{\small
\begin{eqnarray*}
\omega_3(x)
&=&-I(v_1\ge 2,v_1+1\le v_2<v_1+2,v_3\ge v_1+2)A_1x^{-v_1-2}\\
&&-I(0<v_1\le 2,\frac{3}{2}v_1\le v_2<2v_1,v_3\ge 2v_1)p_1C_{v_1}v_1^{\frac{v_1-1}{2}}T_{v_1+1}(\beta_1\sqrt{v_1+1})x^{-2v_1}\\
&&-\left(I(0<v_1< 2,\frac{v_1+v_3}{2}\leq v_2< 2v_1,v_2<v_3\leq 2v_1) \right. \\
&&\left.  +I(v_1\ge 2,\frac{v_1+v_3}{2}\leq v_2< v_1+2,v_2<v_3\le 2+v_1)\right)A_5x^{-v_3}\\
&&+\Big( I(v_1\geq 2,v_1<v_2\leq 1+v_1,v_3\geq 2+v_1) + I(v_1>2,v_1<v_2\leq \frac{v_1+v_3}{2},v_2<v_3\leq 2+v_1)\\
&& + I(0<v_1<2,v_1<v_2\leq \frac{3v_1}{2},v_3\geq 2v_1)+ I(0<v_1<2,v_1<v_2\leq \frac{v_1+v_3}{2},v_3<2v_1)
\Big) \frac{A_{3}^2 x^{-2v_2}}{2}.
\end{eqnarray*}}
\item[(iv).]~When $v_1=2$ and $v_2>4$, set $\gamma_4=\min\{v_2-2,4\}-2$, we have
$$a_n^{\gamma_4}\Big[a_n^2(F^n(a_{n}x)-\Phi_2(x))-k_4(x)\Phi_2(x)\Big ]\to \omega_4(x)\Phi_2(x),$$
where
$$k_4(x)=-A_1x^{-4}-\frac{1}{2}p_1T_3(\sqrt{3}\beta_1)x^{-4}$$
and
\begin{eqnarray*}
\omega_4(x)&=&-J(v_1=2,4<v_2\le 6)A_3x^{-v_2}-J(v_1=2,v_2\ge 6)\Big(A_2x^{-v_1-4}\\
&&+p_1A_1T_3(\sqrt{3}\beta_1)x^{-6}+\frac{1}{3}p_1^2T_3^2(\sqrt{3}\beta_1)x^{-6}-\frac{k_4^2(x)}{2}
\Big).
\end{eqnarray*}
(v),For $v_1>2$ and $v_2=v_1+2$, set $\gamma_5=\min\{v_1,4,v_3-v_1\}-2$, then
\begin{eqnarray*}
a_n^{\gamma_5}\Big[a_n^2(F^n(a_nx)-\Phi_{v_1}(x))-k_5(x)\Phi_{v_1}(x)\Big]
\to \omega_5(x)\Phi_{v_1}(x),
\end{eqnarray*}
where
$$k_5(x)=-A_1x^{-v_1-2}-A_3x^{-v_2}$$
and
\begin{eqnarray*}
\omega_5(x)
&=&-I(2<v_1\le 4,v_2=v_1+2,v_3\ge 2v_1)p_1C_{v_1}v_1^{\frac{v_1-1}{2}}T_{v_1+1}(\beta_1\sqrt{v_1+1})x^{-2v_1}\\
&&-I(v_1\ge 4,v_2=v_1+2,v_3\ge v_1+4)\left(A_2x^{-v_1-4}+A_4x^{-v_1-4}-\frac{k_5^2(x)}{2}\right)\\
&&-\Big(I(2<v_1<4,v_2=v_1+2,v_1+2<v_3\le 2v_1)+I(v_1\ge 4,v_2=v_1+2,\\
&& \quad v_1+2<v_3
\le v_1+4)\Big)A_5x^{-v_3}.
\end{eqnarray*}
\item[(vi).]~When $0<v_1<2$ and $v_2=2v_1$, set $\gamma_6=\min\{2,2v_1,v_3-v_1\}-v_1$,we have
\begin{eqnarray*}
a_n^{\gamma_6}\Big[a_n^{v_1}(F^n(a_nx)-\Phi_{v_1}(x))-k_6(x)\Phi_{v_1}(x)\Big] \to \omega_6(x)\Phi_{v_1}(x),
\end{eqnarray*}
where
 $$k_6(x)=-A_3x^{-2v_1}-p_1C_{v_1}v_1^{\frac{v_1-1}{2}}T_{v_1+1}(\beta_1\sqrt{v_1+1})x^{-2v_1}$$
 and{\small
\begin{eqnarray*}
\omega_6(x)
&=&-I(1\le v_1<2,v_2=2v_1,v_3\ge v_1+2)A_1x^{-v_1-2}-I(0<v_1\le 1,v_2=2v_1,v_3\ge 3v_1)\\
&&\times \left(2A_3p_1C_{v_1}v_1^{\frac{v_1-1}{2}}T_{v_1+1}(\beta_1\sqrt{v_1+1})x^{-v_2-v_1}
+\frac{4}{3}p_1^2C_{v_1}^2v_1^{v_1-1}T_{v_1+1}^2(\beta_1\sqrt{v_1+1})x^{-3v_1}
-\frac{k_{6}^2(x)}{2}\right)\\
&&-\Big(I(0<v_1<1,v_2=2v_1,2v_1<v_3\le 3v_1)+I(1\le v_1<2,v_2=2v_1,2v_1<v_3\le v_1+2)\Big)
A_5x^{-v_3}.
\end{eqnarray*}}
\item[(vii).]~When $v_1=2$ and $v_2=4$, set $\gamma_7=\min\{v_3-2,4\}-2$,then
$$a_n^{\gamma_7}[a_n^{2}(F^n(a_nx)-\Phi_{2}(x))-k_7(x)\Phi_{2}(x)] \to \omega_7(x)\Phi_{2}(x),$$
where
$$k_7(x)=-A_1x^{-4}-A_3x^{-4}-\frac{1}{2}p_1T_{3}(\sqrt{3}\beta_1)x^{-4}$$
and{\small
\begin{eqnarray*}
\omega_7(x)
&=&-I(v_1=2,v_2=4,4<v_3\le 6)A_5x^{-v_3}-I(v_1=2,v_2=4,v_3\ge6)
\\
&&\times\left(A_2x^{-6}+A_3p_1T_3(\sqrt{3}\beta_1)x^{-6}
+p_1A_1T_3(\sqrt{3}\beta_1)x^{-6}
+\frac{1}{3}p_1^2T_3^2(\sqrt{3}\beta_1)x^{-6}+A_4x^{-6}-\frac{k_{7}^2(x)}{2} \right).
\end{eqnarray*}}
In above (i)-(vii), $A_1$-$A_5$ are those given by Proposition \ref{prop2.2}.
\end{itemize}
\end{theorem}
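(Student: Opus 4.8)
The plan is to push the tail expansion of Lemma~\ref{Lemma2.2} through the map $F\mapsto F^n(a_n\,\cdot)$ and read off its first two orders. Write $c_1=2p_1C_{v_1}v_1^{(v_1-1)/2}T_{v_1+1}(\beta_1\sqrt{v_1+1})$, so that the choice \eqref{eq2.2} of $a_n$ gives the \emph{exact} identity $c_1a_n^{-v_1}=n^{-1}$. Starting from $F^n(a_nx)=\exp\bigl(n\log(1-(1-F(a_nx)))\bigr)$ and $\log(1-u)=-\sum_{j\ge1}u^j/j$, I would write
\begin{equation*}
n\log F(a_nx)=-\sum_{j\ge1}\frac{1}{j}\,n^{1-j}\bigl(n(1-F(a_nx))\bigr)^{j}.
\end{equation*}
Substituting Lemma~\ref{Lemma2.2} at the point $a_nx$, i.e.\ $n(1-F(a_nx))=x^{-v_1}\bigl(1+A_1a_n^{-2}x^{-2}+A_2a_n^{-4}x^{-4}+A_3a_n^{-(v_2-v_1)}x^{-(v_2-v_1)}+A_4a_n^{-(v_2-v_1)-2}x^{-(v_2-v_1)-2}+A_5a_n^{-(v_3-v_1)}x^{-(v_3-v_1)}+O(a_n^{-\eta})\bigr)$, together with $n^{1-j}=(c_1a_n^{-v_1})^{j-1}$, yields $n\log F(a_nx)=-x^{-v_1}+R_n(x)$, where $R_n(x)$ is an explicit finite sum of monomials $a_n^{-\mu}x^{-\nu}$ with $\mu$ drawn from $\{2,\,4,\,v_2-v_1,\,(v_2-v_1)+2,\,v_3-v_1,\,v_1,\,v_1+2,\,2v_1,\,3v_1,\dots\}$ plus an error that is $o(\cdot)$ of every monomial retained below.

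Since $R_n(x)\to0$ for fixed $x>0$, exponentiating gives
\begin{equation*}
F^n(a_nx)-\Phi_{v_1}(x)=\Phi_{v_1}(x)\bigl(e^{R_n(x)}-1\bigr)=\Phi_{v_1}(x)\Bigl(R_n(x)+\tfrac12R_n^2(x)+O\bigl(R_n^3(x)\bigr)\Bigr).
\end{equation*}
The seven regimes (i)--(vii) are precisely the possible orderings of the exponents $2,\,v_1,\,v_2-v_1,\,2v_1,\,v_3-v_1$ together with the coincidence cases among them. In each regime I would first identify the dominant monomial of $R_n(x)$: dividing out its power $a_n^{-\delta}$ ($\delta\in\{v_1,2,v_2-v_1\}$) leaves exactly $k_j(x)$, and since $\tfrac12R_n^2(x)$ is then $O(a_n^{-2\delta})$ one gets $a_n^{\delta}(F^n(a_nx)-\Phi_{v_1}(x))\to k_j(x)\Phi_{v_1}(x)$. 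When two exponents coincide, $k_j$ is the corresponding sum of coefficients; for instance when $v_1=2$ (regimes (iv),(vii)) the monomial $a_n^{-2}$ collects both the $A_1$-term of $R_n$ and the leading piece $-\tfrac12c_1a_n^{-2}x^{-4}$ of the $j=2$ summand, and using $C_2=1/(2\sqrt2)$ one has $\tfrac12c_1=\tfrac12p_1T_3(\sqrt3\beta_1)$, so these sum to $k_4(x)$, $k_7(x)$; likewise $v_2=2v_1$ merges an $A_3$-term with the $k_1$-type term and $v_2=v_1+2$ merges an $A_1$-term with an $A_3$-term.

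Next I would isolate the sub-leading order. After subtracting $k_j(x)\Phi_{v_1}(x)$, the remaining monomials of $a_n^{\delta}R_n(x)-k_j(x)$ have exponents exceeding $\delta$ by at least $\gamma_j$, while $a_n^{\delta}\cdot\tfrac12R_n^2(x)$ contributes the single monomial $\tfrac12a_n^{-\delta}k_j^2(x)$ at exponent $\delta$ plus terms of strictly higher exponent; multiplying by $a_n^{\gamma_j}$ and letting $n\to\infty$ then annihilates everything except the coefficients sitting at exponent $\delta+\gamma_j$, which assemble into $\omega_j(x)$. The indicators $I(\cdot,\cdot,\cdot)$ and $J(\cdot,\cdot)$ merely record, via inequalities on $v_1,v_2,v_3$, which of the competing exponents ($2$, $v_2-v_1$, $2v_1$, $v_3-v_1$, $v_1+2$, $v_1+4$, $\dots$) attains this next-smallest value, with ties handled by summing. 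The terms $\tfrac12k_j^2(x)$ appearing in several $\omega_j$ are exactly the contribution of $\tfrac12R_n^2(x)$ when $2\delta$ lands in the sub-leading slot, while the $\tfrac43p_1^2C_{v_1}^2v_1^{v_1-1}T_{v_1+1}^2(\beta_1\sqrt{v_1+1})x^{-3v_1}$-type terms come from the cubic summand $-\tfrac n3(1-F(a_nx))^3$ via $\tfrac13c_1^2=\tfrac43p_1^2C_{v_1}^2v_1^{v_1-1}T_{v_1+1}^2(\beta_1\sqrt{v_1+1})$.

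The only genuine work is bookkeeping rather than analysis. For each of the seven regimes one must (a) confirm the claimed ordering of the exponents and check that every discarded contribution---the $O(a_n^{-\eta})$ of Lemma~\ref{Lemma2.2}, the tails of the $\log$- and $\exp$-series, and all cross terms of $R_n^2(x)$ not listed---is strictly of smaller order than the monomial carrying $\omega_j(x)$; this is where the precise value $\eta=\min\{6,v_4-v_1,v_2-v_1+4,v_3-v_1+2\}$ and the regime hypotheses ($v_2>2v_1$, $v_2>v_1+2$, and so on) are needed, and (b) carry out the coefficient algebra at the coincidences $v_1=2$, $v_2=2v_1$, $v_2=v_1+2$, $v_3=2v_1$, $v_3=v_1+2$, which is exactly what produces the long lists of $I$/$J$ indicators. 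No tool beyond Lemma~\ref{Lemma2.2} and elementary Taylor expansion is required; the proof is a disciplined expansion followed by a finite, if lengthy, case split, and I expect the case (iii) bookkeeping---with its four competing exponents and many sub-cases---to be the most error-prone step.
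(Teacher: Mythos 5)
Your plan coincides with the paper's own proof: the authors likewise expand $n\log F(a_nx)+x^{-v_1}$ via the logarithm series applied to the tail expansion of Lemma~\ref{Lemma2.2} (their $h(a_n;x)$ is your $R_n(x)$, with the $j=2,3$ summands supplying the $x^{-2v_1}a_n^{-v_1}$ and $\tfrac{4}{3}p_1^2C_{v_1}^2v_1^{v_1-1}T_{v_1+1}^2(\beta_1\sqrt{v_1+1})x^{-3v_1}a_n^{-2v_1}$ terms exactly as you describe), then exponentiate and read off the leading and sub-leading monomials regime by regime, carrying out the bookkeeping in full only for case (iii). Your coefficient checks ($c_1a_n^{-v_1}=n^{-1}$, $\tfrac12 c_1=\tfrac12 p_1T_3(\sqrt3\beta_1)$ when $v_1=2$, $\tfrac13 c_1^2$ for the cubic term) all match the paper's computation.
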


\begin{proof}[\bf Proof of Theorem 3.1]
Define $h(a_{n};x)=n\log F(a_{n} x)+x^{-v_1}$ with normalized constant $a_{n}$
satisfied \eqref{eq2.2}. From Lemma \ref{Lemma2.2}, it follows that
\begin{eqnarray*}
1-F(a_n)=n^{-1}G(n),
\end{eqnarray*}
where $$G(n)=1+A_1a_n^{-2}+A_2a_n^{-4}+A_3a_n^{-(v_2-v_1)}+A_4a_n^{-(v_2-v_1)-2}+
A_5a_n^{-(v_3-v_1)}+O(a_n^{-\eta})$$ and $\eta=\min\{6,v_4-v_1,v_2-v_1+4,v_3-v_1+2\}$.
Let
\begin{eqnarray}\label{eq3.1}\nonumber
B_n(x)&=&1+A_1a_n^{-2}x^{-2}+A_2a_n^{-4}x^{-4}+A_3a_n^{-(v_2-v_1)}x^{-(v_2-v_1)}+A_4a_n^{-(v_2-v_1)-2}x^{-(v_2-v_1)-2}\\
&&+A_5a_n^{-(v_3-v_1)}x^{-(v_3-v_1)}+O(a_n^{-\eta}),
\end{eqnarray}
then we can get
\begin{eqnarray}\label{eq3.2}
n(1-F(a_nx))=B_n(x)x^{-v_1}.
\end{eqnarray}
By using \eqref{eq3.1} and Lemma \ref{Lemma2.2}, we have
\begin{eqnarray}\label{eq3.3}\nonumber
&&B_n(x)(1-F(a_nx))\\\nonumber
&=&2p_1C_{v_1}v_1^{\frac{v_1-1}{2}}T_{v_1+1}(\beta_1\sqrt{v_1+1})a_n^{-v_1}x^{-v_1}\\\nonumber
&&\cdot[1+2A_1x^{-2}a_n^{-2}+2A_3x^{-(v_2-v_1)}a_n^{-(v_2-v_1)}+(2A_2+A_1^2)x^{-4}a_n^{-4}\\
&&+2A_5x^{-(v_3-v_1)}a_n^{-(v_3-v_1)}+A_3^2x^{-2(v_2-v_1)}a_n^{-2(v_2-v_1)}\\\nonumber
&&+2(A_4+A_1A_3)x^{-(v_2-v_1)-2}a_n^{-(v_2-v_1)-2} +o(a_{n}^{-\eta_1})]
\end{eqnarray}
and
\begin{eqnarray}\label{eq3.4}\nonumber
&&B_n(x)(1-F(a_nx))^2\\\nonumber
&=&4p_1^2C_{v_1}^2v_1^{v_1-1}T_{v_1+1}^2(\beta_1\sqrt{v_1+1})x^{-2v_1}a_n^{-2v_1}
(1+3A_1x^{-2}a_n^{-2}\\
&&+3A_3x^{-(v_2-v_1)}a_n^{-(v_2-v_1)}+o(a_n^{-\eta_2}))
\end{eqnarray}
where $\eta_1=\min\{4,v_2-v_1+2,2(v_2-v_1),v_3-v_1\}$ and $\eta_2=\min\{2,v_2-v_1\}$.

Combining \eqref{eq3.2},\eqref{eq3.3} and \eqref{eq3.4}, we have
\begin{eqnarray}\label{eq3.5}
h(a_n;x)
&=&n\log F(a_nx)+x^{-v_1}\nonumber\\
&=&\left[(1-B_n(x))-\frac{1}{2}B_n(x)(1-F(a_nx))-\frac{1}{3}B_n(x)(1-F(a_nx))^2(1+o(1))\right]x^{-v_1}\nonumber\\
&=&-A_1x^{-v_1-2}a_n^{-2}-A_2x^{-v_1-4}a_n^{-4}-A_3x^{-v_2}a_n^{-(v_2-v_1)}-A_4x^{-v_2-2}a_n^{-(v_2-v_1)-2}\nonumber\\
&&-A_5x^{-v_3}a_n^{-(v_3-v_1)}-p_1C_{v_1}v_1^{\frac{v_1-1}{2}}T_{v_1+1}(\beta_1\sqrt{v_1+1})x^{-2v_1}a_n^{-v_1}\nonumber\\
&&-2A_1p_1C_{v_1}v_1^{\frac{v_1-1}{2}}T_{v_1+1}(\beta_1\sqrt{v_1+1})x^{-2v_1-2}a_n^{-v_1-2}\nonumber\\
&&-2A_3p_1C_{v_1}v_1^{\frac{v_1-1}{2}}T_{v_1+1}(\beta_1\sqrt{v_1+1})x^{-v_2-v_1}a_n^{-v_2}\nonumber\\
&&-\frac{4}{3}p_1^2C_{v_1}^2v_1^{v_1-1}T_{v_1+1}^2(\beta_1\sqrt{v_1+1})x^{-3v_1}a_n^{-2v_1}
+o(a_n^{-\eta_3}),
\end{eqnarray}
where $\eta_3=\max\{4, v_3-v_1, v_2-v_1+2, v_2,2v_1, v_1+2\}.$

Now we only prove the theorem for the case of $0<v_1<v_2<\min\{v_1+2,2v_1\}$, and the
proofs of the rest cases are similar.
Note that \eqref{eq3.5} implies $\lim_{n\to \infty} h(a_n;x)=0$. Then we get
$$\left|\sum_{i=3}^{\infty}\frac{h^{i-3}(a_n;x)}{i!}\right|<\exp{(h(a_n;x))}\to 1$$
as $n\to \infty.$ Hence
\begin{eqnarray*}
& & F^n(a_{n}x)-\Phi_{v_1}(x)\\
&=& \left( h(a_{n};x)+h^2(a_{n};x)\left( \frac{1}{2}+h(a_{n};x)\sum_{i=3}^{\infty} \frac{h^{i-3}(a_{n};x)}{i!}  \right)  \right)\Phi_{v_1}(x) \\
&=& \Big[ -A_{3}x^{-v_{2}}a_{n}^{-(v_2-v_1)} + \Big( -I(v_1\ge 2,v_1+1\le v_2<v_1+2,v_3\ge v_1+2)A_1x^{-v_1-2}\\
&& -I(0<v_1\le 2,\frac{3}{2}v_1\le v_2<2v_1,v_3\ge 2v_1)p_1C_{v_1}v_1^{\frac{v_1-1}{2}}T_{v_1+1}(\beta_1\sqrt{v_1+1})x^{-2v_1}\\
&& -\left(I(0<v_1< 2,\frac{v_1+v_3}{2}\leq v_2< 2v_1,v_2<v_3\leq 2v_1) \right. \\
&&\left.  +I(v_1\ge 2,\frac{v_1+v_3}{2}\leq v_2< v_1+2,v_2<v_3\le 2+v_1)\right)A_5x^{-v_3}\\
&& +\Big( I(v_1\geq 2,v_1<v_2\leq 1+v_1,v_3\geq 2+v_1) + I(v_1>2,v_1<v_2\leq \frac{v_1+v_3}{2},v_2<v_3\leq 2+v_1)\\
&&  + I(0<v_1<2,v_1<v_2\leq \frac{3v_1}{2},v_3\geq 2v_1)+ I(0<v_1<2,v_1<v_2\leq \frac{v_1+v_3}{2},v_3<2v_1)
\Big) \frac{A_{3}^2 x^{-2v_2}}{2}  \Big)\\
&&\times a_{n}^{-\min(2,v_1,2(v_2-v_1),v_3-v_1)}(1+o(1))  \Big]\Phi_{v_1}(x)
\end{eqnarray*}
for large $n$, which implies that
\[ a_{n}^{\gamma_3}\left[a_{n}^{v_2-v_1}\left(F^n(a_{n}x)-\Phi_{v_1}(x)\right)
-k_3(x)\Phi_{v_1}(x)\right]\to \omega_3(x)\Phi_{v_1}(x) \]
as $n\to \infty$. Here $\gamma_3=\min\{2,v_1,2(v_2-v_1),v_3-v_1\}-v_2+v_1$, $k_3(x)=-A_3x^{-v_2}$ and{\small
\begin{eqnarray*}
\omega_3(x)
&=& -I(v_1\ge 2,v_1+1\le v_2<v_1+2,v_3\ge v_1+2)A_1x^{-v_1-2}\\
&&-I(0<v_1\le 2,\frac{3}{2}v_1\le v_2<2v_1,v_3\ge 2v_1)p_1C_{v_1}v_1^{\frac{v_1-1}{2}}T_{v_1+1}(\beta_1\sqrt{v_1+1})x^{-2v_1}\\
&&-\left(I(0<v_1< 2,\frac{v_1+v_3}{2}\leq v_2< 2v_1,v_2<v_3\leq 2v_1) \right. \\
&&\left. \ \  +I(v_1\ge 2,\frac{v_1+v_3}{2}\leq v_2< v_1+2,v_2<v_3\le 2+v_1)\right)A_5x^{-v_3}\\
&&+\Big( I(v_1\geq 2,v_1<v_2\leq 1+v_1,v_3\geq 2+v_1) + I(v_1>2,v_1<v_2\leq \frac{v_1+v_3}{2},v_2<v_3\leq 2+v_1)\\
&& \ \ + I(0<v_1<2,v_1<v_2\leq \frac{3v_1}{2},v_3\geq 2v_1)+ I(0<v_1<2,v_1<v_2\leq \frac{v_1+v_3}{2},v_3<2v_1)
\Big) \frac{A_{3}^2 x^{-2v_2}}{2}.
\end{eqnarray*}}
The proof is complete.
\end{proof}

Based on Theorem \ref{thm3.1}, the asymptotic expansion for the pdf of $M_n$ can also be derived. Let
\begin{eqnarray}\label{eq3.6}
g_n(x)=na_nF^{n-1}(a_nx)f(a_nx)
\end{eqnarray}
denote the pdf of the normalized maximum, and define
\begin{eqnarray}\label{eq3.7}
\Delta_n(g_n,\Phi_{v}^{'};x)=g_n(x)-\Phi_{v}^{'}(x)
\end{eqnarray}
with $\Phi_{v}^{'}(x)=vx^{-v-1}\Phi_{v}(x)$. From Proposition2.5 of Resnick(1987),
it follows that
$\Delta_n(g_n,\Phi_{v}^{'};x) \to 0$ as $n\to \infty$. The higher-order asymptotic expansion of the pdf
of $M_n$ is given as follows.

\begin{theorem}\label{thm3.2}
For the normalized constant $a_{n}$ given by \eqref{eq2.2}, we have the following results:

\noindent (i), if $0<v_1<2$ and $v_2>2v_1$, set $\gamma_1=\min\{2,v_2-v_1,2v_1\}-v_1$, then
$$a_n^{\gamma_1}[a_n^{v_1}\Delta_n(g_n,\Phi'_{v_1};x)-s_1(x)\Phi_{v_1}^{'}(x)]\to q_1(x)\Phi_{v_1}^{'}(x),$$
where$$s_1(x)=p_1C_{v_1}v_1^{\frac{v_1-1}{2}}T_{v_1+1}(\beta_1\sqrt{v_1+1})(2-x^{-v_1})x^{-v_1}$$
and
\begin{eqnarray*}
q_1(x)
&=&\Big[J(0<v_1<1,2v_1<v_2\le 3v_1)\\
&& +J(1\le v_1< 2,2v_1<v_2\le v_1+2)\Big]\left(\frac{v_2}{v_1}x^{-(v_2-v_1)}-x^{-v_2}\right)A_3\\
&&+J(1\le v_1< 2,v_2\ge v_1+2)(K_1x^{-2}-A_1x^{-v_1-2})\\
&&+J(0< v_1\le 1,v_2\ge 3v_1)p_1^2C_{v_1}^2v_1^{v_1-1}T_{v_1+1}^2(\beta_1\sqrt{v_1+1})\left(4-\frac{10}{3}x^{-v_1}+\frac{1}{2}x^{-2v_1}\right)x^{-2v_1}.
\end{eqnarray*}
(ii), if $v_1>2$ and $v_2>v_1+2$, set $\gamma_2=\min\{4,v_2-v_1,v_1\}-2$, we
$$a_n^{\gamma_2}[a_n^{2}\Delta_n(g_n,\Phi_{v_1}^{'};x)-s_2(x)\Phi_{v_1}^{'}(x)]\to q_2(x)\Phi_{v_1}^{'}(x),$$
where
$$s_2(x)=K_1x^{-2}-A_1x^{-v_1-2}$$
and
\begin{eqnarray*}
q_2(x)
&=&J(v_1\ge4,v_2\ge v_1+4)\left[K_3x^{-4}-(A_2+K_1A_1)x^{-v_1-4}+\frac{1}{2}A_1^2x^{-2v_1-4}\right]\\
&&+\Big(J(v_1\ge4,v_1+2<v_2\le v_1+4)+J(2<v_1<4,v_1+2<v_2\le 2v_1)\Big)
A_3\left(\frac{v_2}{v_1}x^{-(v_2-v_1)}-x^{-v_2}\right)\\
&&+J(2<v_1\le4,v_2\ge 2v_1)p_1C_{v_1}v_1^{\frac{v_1-1}{2}}T_{v_1+1}(\beta_1\sqrt{v_1+1})(2-x^{-v_1})x^{-v_1}.
\end{eqnarray*}
(iii), if $0<v_1<v_2<\min\{v_1+2,2v_1\}$, set $\gamma_3=\min\{2,v_1,2(v_2-v_1),v_3-v_1\}-v_2+v_1$, then
$$a_n^{\gamma_3}[a_n^{v_2-v_1}\Delta_n(g_n,\Phi_{v_1}^{'};x)-s_3(x)\Phi_{v_1}^{'}(x)]\to q_3(x)\Phi_{v_1}^{'}(x)$$
where
\begin{eqnarray}\label{addeq1}
s_3(x)=\left(\frac{v_2}{v_1}x^{-(v_2-v_1)}-x^{-v_2}\right)A_3
\end{eqnarray}
and
\begin{eqnarray}\label{addeq2}
q_3(x)
&=&\Big(I(0<v_1<2,\frac{v_1+v_3}{2}\leq v_2 < 2v_1,v_3\le 2v_1)\nonumber\\
&& +I(v_1\ge2, \frac{3v_1}{2}\leq v_2< 2v_1,v_3\ge 2v_1)\Big)
\left(\frac{v_{3}}{v_{1}}x^{-(v_3-v_1)}-x^{-v_3}\right)A_5\nonumber\\
&&+I\left(0<v_1\le 2,\frac{3}{2}v_1\le v_2< 2v_1,v_3\ge 2v_1\right) p_1C_{v_1}v_1^{\frac{v_1-1}{2}}T_{v_1+1}(\beta_1\sqrt{v_1+1})(2-x^{-v_1})x^{-v_1}\nonumber\\
&&+I(2\leq v_1,v_1+1\le v_2< v_1+2,v_3\ge v_1+2)(K_1x^{-2}-A_1x^{-v_1-2})\nonumber\\
&&+\Big[I\left(0<v_1<2,v_1<v_2\le \frac{3}{2}v_1,v_3\ge 2v_1\right)\nonumber \\
&&
+I\left(0<v_1<2,v_1< v_2\le \frac{v_1+v_3}{2}, v_2<v_3< 2v_1\right)\nonumber\\
&&+I(v_1\ge2,v_1<v_2\le v_1+1, v_3\ge 2+v_1)\nonumber\\
&& +I(v_1> 2, v_1< v_2 \le \frac{v_1+v_3}{2},v_2<v_3\le v_1+2)\Big]\left( \frac{x^{-2v_2}}{2} -\frac{v_2}{v_1}x^{-2v_2+v_1} \right)A_3^2.
\end{eqnarray}
(iv), if $v_1=2$, and $v_2>4$, set $\gamma_4=\min\{v_2-2,4\}-2$, we have
$$a_n^{\gamma_4}[a_n^{2}\Delta_n(g_n,\Phi_{v_1}^{'};x)-s_4(x)\Phi_{v_1}^{'}(x)]\to q_4(x)\Phi_{v_1}^{'}(x)$$
where
$$s_4(x)=-A_1x^{-4}-\frac{1}{2}p_1T_3(\sqrt{3}\beta_1)x^{-4}+K_1x^{-2}+p_1T_3(\sqrt{3}\beta_1),$$
and
\begin{eqnarray*}
q_4(x)&=&J(v_1=2,4<v_2\le 6)A_3\left(\frac{v_2}{2}x^{-(v_2-2)}-x^{-v_2}\right)+J(v_1=2,v_2\ge 6)\\
&&\left[-\left(A_2+A_1K_1+\left(2A_1+\frac{K_1}{2}\right)p_1T_3(\sqrt{3\beta_1})+\frac{5}{6}p_1^2T_3(\sqrt{3}\beta_1)\right)x^{-6}
+\left(K_3\right.\right.\\
&&\left.\left.+(A_1+K_1)p_1T_3(\sqrt{3}\beta_1)+p_1^2T_3^2(\sqrt{3}\beta_1)\right)x^{-4}+\frac{\left(2A_1+p_1T_3(\sqrt{3}\beta_1)
\right)^2x^{-8}}{8}\right].
\end{eqnarray*}
(v), if $v_1>2$ and $v_2=v_1+2$, set $\gamma_5=\min\{v_1,4,v_3-v_1\}-2$, then
$$a_n^{\gamma_5}[a_n^{2}\Delta_n(g_n,\Phi_{v_1}^{'};x)-s_5(x)\Phi_{v_1}^{'}(x)]\to q_5(x)\Phi_{v_1}^{'}(x)$$
where
$$s_5(x)=\left(K_1+A_3\left(1+\frac{2}{v_1}\right)\right)x^{-2}-(A_1+A_3)x^{-v_1-2}$$
and
\begin{eqnarray*}
q_5(x)
&=&I(2<v_1<4,v_2=v_1+2,v_3\ge 2v_1)p_1C_{v_1}v_1^{\frac{v_1-1}{2}}T_{v_1+1}(\beta_1\sqrt{v_1+1})(2x^{-v_1}-x^{-2v_1})\\
&&+\Big(I(2<v_1<4,v_2=v_1+2,v_1+2<v_3\le 2v_1) \\
&& +I(v_1\ge 4,v_2=v_1+2,v_1+2<v_3 \le v_1+4)\Big)\left(\frac{v_3}{v_1}x^{-(v_3-v_1)}-x^{-v_3}\right)A_5\\
&& +I(v_1\ge 4,v_2=v_1+2,v_3\ge v_1+4)\Big[\frac{1}{2}(A_1+A_3)^2x^{-2v_1-4}+(K_2+K_3)x^{-4}\\
&& -\left(\left(\left(1+\frac{2}{v_1}\right)A_3+K_1\right)(A_1+A_3)+A_2+A_4\right)x^{-v_1-4}\Big].
\end{eqnarray*}
(vi), if $0<v_1<2$ and $v_2=2v_1$, set $\gamma_6=\min\{v_1,4,v_3-v_1\}-v_1$, then
$$a_n^{\gamma_6}[a_n^{v_1}\Delta_n(g_n,\Phi_{v_1}^{'};x)-s_6(x)\Phi_{v_1}^{'}(x)]\to q_6(x)\Phi_{v_1}^{'}(x)$$
where
$$s_6(x)=p_1C_{v_1}v_1^{\frac{v_1-1}{2}}T_{v_1+1}(\beta_1\sqrt{v_1+1})(2x^{-v_1}-x^{-2v_1})+A_3(2x^{-v_1}-x^{-2v_1})$$
and
\begin{eqnarray*}
q(x)&=&I(1\le v_1<2,v_2=2v_1,v_3 \ge v_1+2)(K_1x^{-2}-A_1x^{-v_1-2})\\
&&+(I(0< v_1<1,v_2=2v_1,2v_1<v_3 \le 3v_1)+I(1\le v_1<2,v_2=2v_1,2v_1<v_3 \\
&&\le v_1+2))A_5\left(\frac{v_3}{v_1}x^{-(v_3-v_1)}-x^{-v_3}\right)+I(0<v_1\le 1,v_2=2v_1,v_3\ge 3v_1)\\
&&\left[p_1^2C_{v_1}^2v_1^{v_1-1}T_{v_1+1}^2(\beta_1\sqrt{v_1+1})x^{-2v_1}\left(4-\frac{10}{3}x^{-v_1}\right)+p_1C_{v_1}v_1^{\frac{v_1-1}{2}}T_{v_1+1}(\beta_1\sqrt{v_1+1})
\right.\\
&&\left.6A_3(-x^{-3v_1}+x^{-2v_1})-2A_3^2x^{-3v_1}+\frac{1}{2}(A_3+p_1C_{v_1}v_1^{\frac{v_1-1}{2}}T_{v_1+1}(\beta_1\sqrt{v_1+1}))^2x^{-4v_1}\right].
\end{eqnarray*}
(vii), if $v_1=2$ and $v_2=4$, set $\gamma_7=\min\{v_3-v_1,4\}-2$, then
$$a_n^{\gamma_7}[a_n^{2}\Delta_n(g_n,\Phi_{v_1}^{'};x)-s_7(x)\Phi_{v_1}^{'}(x)]\to q_7(x)\Phi_{v_1}^{'}(x)$$
where
$$s_7(x)=(K_1+2A_3+p_1T_3(\sqrt{3}\beta_1))x^{-2}-\left(A_1+A_3+\frac{1}{2}p_1T_3(\sqrt{3}\beta_1)\right)x^{-4}$$
and
\begin{eqnarray*}
q_7(x)
&=&I(v_1=2,v_2=4,4<v_3\le 6)A_5\left(\frac{v_3}{2}x^{-(v_3-2)}-x^{-v_3}\right)\\
&&+I(v_1=2,v_2=4,v_3\ge 6)\Big[\frac{1}{2}\left((A_1+A_3)+\frac{1}{2}p_1T_3(\sqrt{3}\beta_1)\right)^2x^{-8}
-\Big(A_4+A_2+2A_3^2\\
&& +2A_1A_3+A_1K_1+A_3K_1+\left(2A_1+3A_3+\frac{K_1}{2}\right)p_1T_3(\sqrt{3}\beta_1)
+ \frac{5}{6}p_1^2T_3^2(\sqrt{3}\beta_1)\Big)x^{-6}\\
&& +(K_2+K_3+(A_1+K_1+3A_3)p_1T_3(\sqrt{3}\beta_1)+p_1^2T_3^2(\sqrt{3}\beta_1))x^{-4}\Big].
\end{eqnarray*}
In (i)-(vii), $A_1$-$A_5$ are those defined by Lemma \ref{lemma2.2} and  and $K_1, K_2, K_3$ are given by Lemma \ref{lemma3.1} below.
\end{theorem}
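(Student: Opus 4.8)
The plan is to reduce the density expansion to the cdf expansion already obtained in Theorem \ref{thm3.1} together with a companion expansion of the density $f$ of the $\MSTD$. Starting from \eqref{eq3.6}, i.e. $g_n(x)=na_nf(a_nx)F^{n-1}(a_nx)$, and writing $\Phi_{v_1}'(x)=v_1x^{-v_1-1}\Phi_{v_1}(x)$, I would split
$$\frac{g_n(x)}{\Phi_{v_1}'(x)}=\frac{na_nf(a_nx)}{v_1x^{-v_1-1}}\cdot\frac{F^{n}(a_nx)}{\Phi_{v_1}(x)}\cdot\frac{1}{F(a_nx)}$$
and expand each of the three factors in the quantities $a_n^{-2},a_n^{-v_1},a_n^{-(v_2-v_1)},a_n^{-(v_3-v_1)},a_n^{-(v_2-v_1)-2},\dots$; multiplying out and grouping by order, precisely as in the proof of Theorem \ref{thm3.1}, then produces $s_i(x)$ and $q_i(x)$ case by case.

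First I would record the density expansion, which is Lemma \ref{lemma3.1} and supplies the constants $K_1,K_2,K_3$. Differentiating the tail expansion of Lemma \ref{lemma2.2} term by term is legitimate here because each $1-F_{v_i,\beta_i}$ is, for large argument, smooth with a derivative admitting the analogous expansion, as one checks directly from \eqref{eq1.1}--\eqref{eq1.2}; this yields $f$ of the form $2p_1C_{v_1}v_1^{\frac{v_1-1}{2}}T_{v_1+1}(\beta_1\sqrt{v_1+1})\big(v_1x^{-v_1-1}+(v_1+2)A_1x^{-v_1-3}+v_2A_3x^{-v_2-1}+\cdots\big)$. Using $a_n^{v_1}=2p_1C_{v_1}v_1^{\frac{v_1-1}{2}}T_{v_1+1}(\beta_1\sqrt{v_1+1})\,n$ to cancel the prefactor gives
$$\frac{na_nf(a_nx)}{v_1x^{-v_1-1}}=1+K_1a_n^{-2}x^{-2}+\tfrac{v_2}{v_1}A_3a_n^{-(v_2-v_1)}x^{-(v_2-v_1)}+K_3a_n^{-4}x^{-4}+\cdots,$$
the remaining constants being obtained the same way. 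For the middle factor I would use $F^n(a_nx)=\Phi_{v_1}(x)\exp\!\big(h(a_n;x)\big)$ with $h(a_n;x)$ already expanded in \eqref{eq3.5}, together with $\exp(h(a_n;x))=1+h(a_n;x)+\tfrac12h^2(a_n;x)+\cdots$ since $h(a_n;x)\to0$. For the last factor, $1/F(a_nx)=1+(1-F(a_nx))+(1-F(a_nx))^2+\cdots$ with the tail of Lemma \ref{lemma2.2}; note $1-F(a_nx)=O(a_n^{-v_1})$, so this factor contributes at the scale $a_n^{-v_1}$, which is leading when $v_1<2$ and subleading or negligible when $v_1>2$.

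Multiplying the three expansions, $g_n(x)/\Phi_{v_1}'(x)-1$ becomes an explicit $x$-dependent linear combination of the powers $a_n^{-2},a_n^{-v_1},a_n^{-(v_2-v_1)},a_n^{-2(v_2-v_1)},a_n^{-(v_3-v_1)},a_n^{-(v_2-v_1)-2},\dots$. In each regime (i)--(vii) one identifies the smallest such exponent, call it $\delta_i$, whose coefficient is $s_i(x)$, and the next smallest, $\delta_i+\gamma_i$, whose coefficient is $q_i(x)$; multiplying back through by $a_n^{\delta_i+\gamma_i}\Phi_{v_1}'(x)$ and letting $n\to\infty$ gives the stated limits. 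As a consistency check these match formal differentiation of Theorem \ref{thm3.1}, e.g. $\tfrac{d}{dx}\!\big(k_3(x)\Phi_{v_1}(x)\big)=s_3(x)\Phi_{v_1}'(x)$ with $k_3,s_3$ as in \eqref{addeq1}.

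The work is bookkeeping rather than a new idea: for each of the seven regimes one must correctly order the competing exponents $2,\ v_1,\ v_2-v_1,\ 2(v_2-v_1),\ v_3-v_1,\ v_2-v_1+2$ and then collect exactly the cross terms among the three factors that survive at the leading and subleading orders — the indicator functions $I(\cdot,\cdot,\cdot)$ and $J(\cdot,\cdot)$ are the device recording which survive where. The only step needing care beyond algebra is the justification of the term-by-term differentiation behind Lemma \ref{lemma3.1}, which is why one argues from the closed forms \eqref{eq1.1}--\eqref{eq1.2} rather than differentiating an asymptotic series in the abstract.
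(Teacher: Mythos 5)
Your proposal is correct and follows essentially the same route as the paper: the paper likewise writes $g_n(x)=na_nf(a_nx)\,F^{n}(a_nx)/F(a_nx)$, expands $na_nf(a_nx)/F(a_nx)$ using Lemma \ref{lemma2.2} together with a density expansion (Lemma \ref{lemma3.1}), and then multiplies by the expansion of $F^n(a_nx)$ from the proof of Theorem \ref{thm3.1} before collecting terms by the competing exponents in each regime. The only cosmetic difference is that the paper obtains Lemma \ref{lemma3.1} by Taylor-expanding the closed-form pdf \eqref{eq1.1}--\eqref{eq1.2} directly rather than by differentiating the tail expansion, but your justification via the closed forms yields the same constants (e.g.\ $K_1=\tfrac{v_1+2}{v_1}A_1$), so the two derivations coincide.
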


To prove Theorem 3.2, we need the following lemma.
\begin{lemma}\label{lemma3.1}
Let $F(x)$ denote the cdf of random variable T defined by \eqref{eq1.3} and $f(x)$ the pdf of $T$. With normalized constant $a_{n}$ given by \eqref{eq2.2}, we have
\begin{eqnarray*}
na_nf(a_nx)&=&v_1x^{-v_1-1}\left[1+K_1x^{-2}a_n^{-2}+\frac{v_2}{v_1}A_3x^{-(v_2-v_1)}a_n^{-(v_2-v_1)}+K_2x^{-(v_2-v_1)-2}a_n^{-(v_2-v_1)-2}\right.\\
&&\left.+\frac{v_3}{v_1}A_5x^{-(v_3-v_1)}a_n^{-(v_3-v_1)}
+K_3x^{-4}a_n^{-4}+O(a_n^{-\eta})\right]
\end{eqnarray*}
for large $x$, where $\eta=\min\{6,v_2-v_1+4,v_3-v_1+2,v_4-v_1\}$, and
\begin{eqnarray*}
K_1&=&-\frac{v_1\left(T_{v_1+1}(\beta_1\sqrt{v_1+1})(v_1+1)
+C_{v_1+1}(1+\beta_1^2)^{-\frac{v_1+2}{2}}\beta_1\sqrt{v_1+1}\right)}
{2T_{v_1+1}(\beta_1\sqrt{v_1+1})},\\
K_2&=&-\frac{p_2C_{v_2}v_2^{\frac{v_2+3}{2}}\left(T_{v_2+1}(\beta_2\sqrt{v_2+1})(v_2+1)+C_{v_2+1}(\beta_2\sqrt{v_2+1})(1+\beta_2^2)^{-\frac{v_2+2}{2}}\right)}
{2p_1C_{v_1}v_1^{\frac{v_1+1}{2}}T_{v_1+1}(\beta_1\sqrt{v_1+1})},\\
K_3&=&\frac{\frac{v^2(v_1+3)(v_1+1)}{8}T_{v_1+1}(\beta_1\sqrt{v_1+1})+C_{v_1+1}(\beta_1\sqrt{v_1+1})(1+\beta_1^2)^{-\frac{v_1+2}{2}}
\left(\frac{2v_1^3+v_1^3\beta_1^2+3v_1^2\beta_1^2+5v_1^2}{8(1+\beta_1^2)}\right)}{T_{v_1+1}(\beta_1\sqrt{v_1+1})}.
\end{eqnarray*}
\end{lemma}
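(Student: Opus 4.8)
The plan is to obtain the expansion of $f=\sum_{i=1}^{r}p_if_{v_i,\beta_i}$ by first expanding each skew-$t$ density $f_{v_i,\beta_i}$ separately for large argument directly from \eqref{eq1.1}--\eqref{eq1.2}, then summing the resulting series, isolating the contribution of the smallest degree of freedom $v_1$, and finally rescaling the argument by $a_n$ and clearing constants using \eqref{eq2.2}. (One could instead differentiate the tail expansion of Lemma \ref{Lemma2.2} term by term, since formally $f=-(1-F)'$, and the coefficients below are exactly what that yields; but working from \eqref{eq1.1} is cleaner for justifying the remainder, because each $f_{v_i,\beta_i}$ is an explicit smooth function with an honest asymptotic series.)

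The first step is the one-component expansion. From \eqref{eq1.2}, $t_v(x)=C_vv^{(v+1)/2}x^{-v-1}(1+vx^{-2})^{-(v+1)/2}$, and the binomial series turns this into $C_vv^{(v+1)/2}x^{-v-1}\bigl(1-\tfrac{v(v+1)}{2}x^{-2}+\tfrac{v^2(v+1)(v+3)}{8}x^{-4}+O(x^{-6})\bigr)$. The argument of $T_{v+1}$ in \eqref{eq1.1} equals $\beta\sqrt{v+1}\,(1+vx^{-2})^{-1/2}$, which tends to $u_0:=\beta\sqrt{v+1}$; Taylor-expanding $T_{v+1}$ about $u_0$, using $T_{v+1}'=t_{v+1}$ with $t_{v+1}(u_0)=C_{v+1}(1+\beta^2)^{-(v+2)/2}$ and the value of $t_{v+1}'$ at $u_0$ for the next order, gives $T_{v+1}(\,\cdot\,)=T_{v+1}(u_0)\bigl(1+c_1x^{-2}+c_2x^{-4}+O(x^{-6})\bigr)$ with explicit $c_1,c_2$. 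Multiplying the two series and collecting powers produces
\[
f_{v,\beta}(x)=2C_vv^{(v+1)/2}T_{v+1}(\beta\sqrt{v+1})\,x^{-v-1}\Bigl(1+\tfrac{v+2}{v}A_1(v,\beta)\,x^{-2}+\tfrac{v+4}{v}A_2(v,\beta)\,x^{-4}+O(x^{-6})\Bigr),
\]
where $A_1(v,\beta),A_2(v,\beta)$ are exactly the quantities of Lemma \ref{Lemma2.1}; this is the termwise derivative of \eqref{eq2.1}, a consistency one can recheck by integrating back over $(x,\infty)$. At $v=v_1,\ \beta=\beta_1$ the identities $\tfrac{v_1+2}{v_1}A_1=K_1$ and $\tfrac{v_1+4}{v_1}A_2=K_3$ hold after cancellation.

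The second step is the summation. Factor out the $i=1$ leading term $2p_1C_{v_1}v_1^{(v_1+1)/2}T_{v_1+1}(\beta_1\sqrt{v_1+1})x^{-v_1-1}$; each component $i\ge 2$ then contributes a relative term of order $x^{-(v_i-v_1)}$ and sub-corrections of order $x^{-(v_i-v_1)-2},x^{-(v_i-v_1)-4},\dots$. Since $v_1<v_2<v_3<v_4$, only the leading term and first sub-correction of $i=2$ and the leading term of $i=3$ survive to the stated precision; the $x^{-6}$ correction of the $v_1$ block, the $x^{-(v_2-v_1)-4}$ and $x^{-(v_3-v_1)-2}$ corrections, and the $v_4$ leading term are absorbed into $O(x^{-\eta})$ with $\eta=\min\{6,\,v_2-v_1+4,\,v_3-v_1+2,\,v_4-v_1\}$. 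Because the exponent of $v_i$ in the leading coefficient of $f_{v_i,\beta_i}$ is $(v_i+1)/2$ while in that of $1-F_{v_i,\beta_i}$ it is $(v_i-1)/2$, each leading relative coefficient carries an extra factor $v_i/v_1$ compared with the $A_j$'s, giving $\tfrac{v_2}{v_1}A_3$ and $\tfrac{v_3}{v_1}A_5$, and the $i=2$ sub-correction has coefficient $\tfrac{v_2+2}{v_1}A_4$, which equals $K_2$. Finally I would substitute $a_nx$ for $x$ (legitimate since $a_nx\to\infty$ for each fixed $x>0$, whence $O(x^{-\eta})$ becomes $O(a_n^{-\eta})$) and replace $n$ by $a_n^{v_1}\bigl(2p_1C_{v_1}T_{v_1+1}(\beta_1\sqrt{v_1+1})v_1^{(v_1-1)/2}\bigr)^{-1}$ from \eqref{eq2.2}; the constants together with the factor $(a_nx)^{-v_1-1}$ cancel against $na_n$, leaving precisely the expansion asserted in the lemma.

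The main obstacle is the bookkeeping in the two multiplications: carrying enough terms in the product of $t_v$ with the Taylor series of $T_{v+1}$, and then across the finite sum over $i$, to pin $K_1,K_2,K_3$ down in exactly the stated closed form, and checking that the single exponent $\eta=\min\{6,v_2-v_1+4,v_3-v_1+2,v_4-v_1\}$ dominates every truncated term under all admissible orderings $v_1<v_2<v_3<v_4$, including the degenerate cases $r=1,2,3$ where the missing components --- and hence the terms involving $v_2,v_3,v_4$ --- simply drop out. The purely analytic point, that every remainder is a genuine $O(\cdot)$, is immediate because each $f_{v_i,\beta_i}$ is smooth with an explicitly computable asymptotic series.
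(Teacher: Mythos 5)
Your proposal follows essentially the same route as the paper: expand the single-component density by combining the binomial series for $\left(1+\tfrac{v}{x^2}\right)^{-\frac{v+1}{2}}$ with the Taylor expansion of $T_{v+1}$ about $\beta\sqrt{v+1}$ (using $T_{v+1}'=t_{v+1}$), multiply, sum over the mixture components factoring out the $i=1$ leading term, and then substitute $a_nx$ and clear constants via \eqref{eq2.2}; your identifications $K_1=\tfrac{v_1+2}{v_1}A_1$, $K_3=\tfrac{v_1+4}{v_1}A_2$ and $K_2=\tfrac{v_2+2}{v_1}A_4$ agree with the paper's directly computed coefficients $C_1(v,\beta)$, $C_2(v,\beta)$. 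The argument and the bookkeeping are correct and match the paper's proof.
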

\begin{proof}
Using Taylor expansion with Lagrange remainder term, for large $x$ we have
\begin{eqnarray*}
&&T_{v+1}\left( \frac{\beta\sqrt{v+1}}{\sqrt{x^2+v}}x  \right)\\
&=& T_{v+1}(\beta \sqrt{v+1})+C_{v+1}\beta \sqrt{v+1}(1+\beta^2)^{-\frac{v+2}{2}}
\left( -\frac{v}{2}x^{-2}+\frac{3v^2+v^2\beta^2-v^3\beta^2}{8(1+\beta^2)}x^{-4}
+O(x^{-6}) \right)
\end{eqnarray*}
and
\[\left( 1+\frac{v}{x^2} \right)^{-\frac{v+1}{2}}
=1-\frac{v(v+1)}{2}x^{-2}+\frac{(v+1)(v+3)v^2}{8}x^{-4}+O(x^{-6}),\]
which implies that
\begin{eqnarray}\label{eq3.8}
&&\left( 1+\frac{v}{x^2} \right)^{-\frac{v+1}{2}}
T_{v+1}\left( \frac{\beta\sqrt{v+1}}{\sqrt{x^2+v}}x  \right)x^{-v-1} \nonumber\\
&=& T_{v+1}(\beta \sqrt{v+1}) x^{-v-1} \left[ 1- \left( \frac{v(v+1)}{2} +\frac{vC_{v+1}\beta\sqrt{v+1}(1+\beta^2)^{-\frac{v+2}{2}}}{2T_{v+1}(\beta\sqrt{v+1})}  \right)x^{-2} \right. \nonumber\\
& & \left. + \left( \frac{(v+1)(v+3)v^2}{8} +\frac{C_{v+1}\beta \sqrt{v+1}(1+\beta^2)^{-\frac{v+4}{2}}(3v^2+v^2\beta^2-v^3\beta^2)}{8T_{v+1}(\beta\sqrt{v+1})} \right. \right. \nonumber\\
& & \quad \left. \left. +\frac{v^2(v+1)^{\frac{3}{2}}\beta C_{v+1}(1+\beta^2)^{-\frac{v+2}{2}}}{4T_{v+1}(\beta\sqrt{v+1})}\right)x^{-4} + O(x^{-6}) \right] \nonumber\\
&=& T_{v+1}(\beta \sqrt{v+1}) x^{-v-1} \left[ 1+C_{1}(v,\beta)x^{-2}+C_{2}(v,\beta)x^{-4} + O(x^{-6})  \right].
\end{eqnarray}
Here,
\[C_{1}(v,\beta)=-\frac{v(v+1)}{2}-\frac{vC_{v+1}\beta \sqrt{v+1}(1+\beta^2)^{-\frac{v+2}{2}}}{2T_{v+1}(\beta\sqrt{v+1})}\]
and
\begin{eqnarray*}
C_{2}(v,\beta) &=& \frac{(v+1)(v+3)v^2}{8} +\frac{C_{v+1}\beta \sqrt{v+1}(1+\beta^2)^{-\frac{v+4}{2}}(3v^2+v^2\beta^2-v^3\beta^2)}{8T_{v+1}(\beta\sqrt{v+1})} \nonumber\\
& &  +\frac{v^2(v+1)^{\frac{3}{2}}\beta C_{v+1}(1+\beta^2)^{-\frac{v+2}{2}}}{4T_{v+1}(\beta\sqrt{v+1})}.
\end{eqnarray*}
Combining \eqref{eq1.1}, \eqref{eq1.2}, \eqref{eq1.4} and \eqref{eq3.8}, we can get
\begin{eqnarray*}
f(a_{n}x)
&=&\sum_{i=1}^{r}2p_{i}C_{v_{i}}v_{i}^{\frac{v_{i}+1}{2}}\left(  1+\frac{v_{i}}{(a_{n}x)^2} \right)^{-\frac{v_{i}+1}{2}}T_{v_{i}+1}\left( \frac{\beta_{i}a_{n}x\sqrt{v_{i}+1}}{\sqrt{v_{i}+(a_{n}x)^2}} \right)(a_{n}x)^{-v_{i}-1}\\
&=& 2p_{1}C_{v_{1}}v_{1}^{\frac{v_{1}+1}{2}}T_{v_{1}+1}(\beta_1\sqrt{v_1+1})(a_{n}x)^{-v_{1}-1}
\left[  1+C_1(v_1,\beta_1)(a_n x)^{-2} +C_2(v_1,\beta_1)(a_{n}x)^{-4}\right. \\
& & \quad \left. +\sum_{i=2}^{r}\frac{p_{i}C_{v_i}v_{i}^{\frac{v_i+1}{2}}T_{v_i+1}(\beta_i\sqrt{v_i+1})}
{p_{1}C_{v_1}v_{1}^{\frac{v_1+1}{2}}T_{v_1+1}(\beta_1\sqrt{v_1+1})}(a_{n}x)^{-(v_i-v_1)}
\left(  1+C_1(v_i,\beta_i)(a_{n}x)^{-2} \right. \right. \\
& & \quad \left. \left. +C_2(v_i,\beta_i)(a_{n}x)^{-4} + O(a_{n}^{-6}) \right)+O(a_{n}^{-6}) \right]\\
&=& 2p_{1}C_{v_1}v_{1}^{\frac{v_1+1}{2}}T_{v_1+1}(\beta_1\sqrt{v_1+1})(a_{n}x)^{-v_1-1}
\left[  1+K_1(a_{n}x)^{-2}+\frac{v_2}{v_1}A_{3}(a_{n}x)^{-(v_2-v_1)} \right. \\
& & \left. +K_2(a_{n}x)^{-(v_2-v_1)-2} +\frac{v_3}{v_1}A_5(a_n x)^{-(v_3-v_1)} +K_3(a_{n}x)^{-4} + O(a_{n}^{-\eta}) \right]
\end{eqnarray*}
for large $n$, where $\eta=\min\{6,v_2-v_1+4,v_3-v_1+2,v_4-v_1\}$, and $K_1$, $K_2$, $K_3$ are defined as above.
With normalized constant $a_n$ given by \eqref{eq2.2}, the desired result can be derived, which complete the proof.
\end{proof}

\begin{proof}[\bf Proof of Theorem3.2]
By Lemma \ref{lemma2.2} and Lemma \ref{lemma3.1}, we have
\begin{eqnarray}\label{eq3.9}
\frac{na_nf(a_nx)}{F(a_nx)}
&=&vx^{-v_1-1}\left[1+K_1x^{-2}a_n^{-2}+2p_1C_{v_1}v_1^{\frac{v_1-1}{2}}T_{v_1+1}(\beta_1\sqrt{v_1+1})x^{-v_1}a_n^{-v_1}\right.\nonumber\\
&&+\frac{v_2}{v_1}A_3x^{-(v_2-v_1)}a_n^{-(v_2-v_1)}+K_2x^{-(v_2-v_1)-2}a_n^{-(v_2-v_1)-2}+K_3x^{-4}a_n^{-4}\nonumber\\
&&+\frac{v_3}{v_1}A_5x^{-(v_3-v_1)}a_n^{-(v_3-v_1)}+2p_1C_{v_1}v_1^{\frac{v_1-1}{2}}T_{v_1+1}(\beta_1\sqrt{v_1+1})(A_1+K_1)x^{-v_1-2}a_n^{-v_1-2}\nonumber\\
&&+2A_3p_1C_{v_1}v_1^{\frac{v_1-1}{2}}T_{v_1+1}(\beta_1\sqrt{v_1+1})\left(\frac{v_2}{v_1}+1\right)x^{-v_2}a_n^{-v_2}\nonumber\\
&&\left.+4p_1^2C_{v_1}^2v_1^{v_1-1}T_{v_1+1}^2(\beta_1\sqrt{v_1+1})x^{-2v_1}a_n^{-2v_1} + O\left((a_{n}^{-\eta_3}\right)\right]
\end{eqnarray}
for large $n$, where $\eta_3=\min(v_2-v_1+2,v_3-v_1,4,v_2,v_1+2,2v_1)$.

Now we consider the case $0<v_1<v_2<\min\{2v_1,v_1+2\}$. Proofs of the rest cases are similar, and we omit here.
From \eqref{eq3.2}, \eqref{eq3.6}, \eqref{eq3.7}, \eqref{eq3.9} and Theorem \ref{thm3.1}, it follows that
\begin{eqnarray*}
& & \Delta_n\left( g_n,\Phi'_{v_1};x \right) \\
&=& na_{n}f(a_{n}x)\frac{F^{n}(a_{n}x)}{F(a_{n}x)}-\Phi'_{v_1}(x)\\
&=& \Phi'_{v_1}(x)\Big[  \frac{v_2}{v_1}A_3x^{-(v_2-v_1)}a_{n}^{-(v_2-v_1)} + K_{1}x^{-2}a_{n}^{-2}
+ 2p_{1}C_{v_1}v_{1}^{\frac{v_1+1}{2}}T_{v_1+1}(\beta_1\sqrt{v_1+1})x^{-v_1}a_{n}^{-v_1} \\
& &  +\frac{v_3}{v_1}A_5x^{-(v_3-v_1)}a_{n}^{-(v_3-v_1)} - A_3x^{-v_2}a_{n}^{-(v_2-v_1)}
-\frac{v_2}{v_1}A_{3}^{2}x^{-2v_2+v_1}a_{n}^{-2(v_2-v_1)} \\
& & +\omega_3(x)a_{n}^{-\min\{2,v_1,v_3-v_1,2(v_2-v_1)\}}(1+o(1)) \Big]  \\
&=& \Phi'_{v_1}(x)\left\{ \left( \frac{v_2}{v_1}x^{-(v_2-v_1)} -x^{-v_2} \right)A_{3}a_{n}^{-(v_2-v_1)}
+ \left[ \left( I\left(v_1\ge 2, \frac{v_1+v_3}{2}\le v_2<v_1+2, v_3\le 2+v_1\right) \right. \right. \right.
\\
& & \left.+I\left(0<v_1<2,\frac{v_1+v_3}{2}\le v_2 <2v_1, v_3\le 2v_1  \right)\right)\left( \frac{v_3}{v_1}x^{-(v_3-v_1)}-x^{-v_3} \right)A_5\\
&&
+ I(v_1\ge 2,v_1+1\le v_2<v_1+2,v_3\geq 2+v_1,v_3\ge 2+v_1)\left( K_1x^{-2}-A_1x^{-v_1-2} \right) \\
& &
+ I\left(0<v_1\le 2, \frac{3v_1}{2}\le v_2<2v_1, v_3\leq 2v_1\right)
p_{1}C_{v_1}v_{1}^{\frac{v_1+1}{2}}T_{v_1+1}(\beta_1\sqrt{v_1+1})x^{-v_1}\left( 2-x^{-v_1} \right) \\
& &
+\left(I(v_1\ge 2,v_1<v_2\le 1+v_1,v_3\ge 2+v_1) + I\left( v_1>2,v_1<v_2\leq \frac{v_1+v_3}{2},v_3\leq 2+v_1\right) \right.\\
& &
\left. + I\left( 0<v_1<2,v_1<v_2\le \frac{3v_1}{2},v_3\ge 2v_1 \right)
+ I\left( 0<v_1<2,v_1<v_2\le \frac{v_1+v_3}{2},v_3<2v_1 \right) \right)\\
& & \left. \left. \times \left( \frac{1}{2}x^{-2v_2}-\frac{v_2}{v_1}x^{-2v_2+v_1} \right)A_3^2 \right]
a_{n}^{-\min\{2,v_1,v_3-v_1,2(v_2-v_1)\}}(1+o(1))   \right\},
\end{eqnarray*}
where $A_1-A_5$, $K_1$, $\gamma_3$ and $\omega_3(x)$ are given by Theorem \ref{thm3.1} and Lemma \ref{lemma3.1}.
Thus, we can get
\[a_{n}^{\gamma_3}\left[ a_{n}^{v_2-v_1}\Delta_n(g_n,\Phi'_{v_1};x)-s_3(x)\Phi'_{v_1}(x) \right] \to q_3(x)\Phi'_{v_1}(x)\]
as $n\to \infty$, where $s_3(x)$ and $q_3(x)$ are those given by \eqref{addeq1} and \eqref{addeq2}, respectively.

The proof is complete.
\end{proof}

Proposition \ref{prop2.2} shows that $F\in D_{p}(\Phi_{1})$. Noting that
$F\left( \alpha_n|x|^{\beta_n}\sign (x) \right)=F\left( a_nx^{\frac{1}{v_1}} \right)$ and
$\Phi_1(x)=\Phi_{v_1}\left( x^{\frac{1}{v_1}} \right)$ for $x>0$, where the normalized constants
$\alpha_n=a_n$ and $\beta_n=\frac{1}{v_1}$. From Theorem \ref{thm3.1}, one
can easily get the higher-order expansions of the cdf of $M_n$ under power
normalization.

\begin{remark}\label{rem3.1}
With the normalized constants $\alpha_n=a_n$ and $\beta_n=\frac{1}{v_1}$, we have
$F^{n}\left( \alpha_n|x|^{\beta_n}\sign(x) \right)-\Phi_{1}(x)=F^n\left( a_n x^{\frac{1}{v_1}} \right)
-\Phi_{v_1}\left( x^{\frac{1}{v_1}} \right)$ for $x>0$, where $a_{n}$ is given by \eqref{eq2.2}.
Hence, the higher-order expansions of the cdf of extremes from the mixed skew-t sample under power
normalization can be derived through replacing $x$
by $x^{\frac{1}{v_1}}$ in Theorem \ref{thm3.1}.
\end{remark}

For $x>0$, let
$$h_n(x)=\frac{n}{v_1}\alpha_nx^{\frac{1}{v_1}-1}F^{n-1}\left(\alpha_n|x|^{\beta_n}\sign(x)\right)f\left(\alpha_n|x|^{\beta_n}\sign(x)\right)$$
denote the pdf of the $M_n$ under power normalization. Then
\begin{eqnarray}\label{eq3.14}
\Delta_n^p(h_n,\Phi_{1}^{'};x)&=&h_n(x)-\Phi_{1}^{'}(x) \nonumber \\
&=&\frac{x^{\frac{1}{v_1}-1}}{v_1}\left(na_nF^{n-1}\left(a_nx^{\frac{1}{v_1}}
\right)f\left(a_nx^{\frac{1}{v_1}}\right)-v_1x^{-1-\frac{1}{v_1}}\Phi_{v_1}(x^{\frac{1}{v_1}})\right) \nonumber\\
&=&\frac{x^{\frac{1}{v_1}-1}}{v_1}\Delta_n(g_n,\Phi_{v_1}^{'};x^{\frac{1}{v_1}}).
\end{eqnarray}
for $x>0$, where $\Phi_1^{'}(x)=x^{-2}\Phi_1(x)=\frac{x^{\frac{1}{v_1}-1}}{v_1}\Phi_{v_1}^{'}\left(x^{\frac{1}{v_1}}\right)$. By using Theorem \ref{thm3.2},  we can derive the higher-order expansions of the pdf of $M_n$ under
power normalization stated as follows.

\begin{remark}\label{rem3.2}
Note that \eqref{eq3.14} shows that
\begin{eqnarray*}
\Delta_n^p(h_n,\Phi_{1}^{'}(x);x)=\frac{x^{\frac{1}{v_1}-1}}{v_1}
\Delta_n(g_n,\Phi_{v_1}^{'};x^{\frac{1}{v_1}})
\end{eqnarray*}
holds with normalized constants $\alpha_n=a_n$ and $\beta_n=\frac{1}{v_1}$, where
$a_n$ is given by \eqref{eq2.2}. Hence, the higher order expansions of the pdf of the extremes from the mixed skew-t sample under power normalization can be calculated straightly by using \eqref{eq3.14} and Theorem \ref{thm3.2}.
\end{remark}

\section{Numerical analysis}\label{sec4}

In this section, numerical studies are presented to illustrate the accuracy of higher-order expansions
of the cdf and the pdf of $M_n$ under the linear and power normalization.
Let $L_i^{l}(x)$ and $U_i^{l}(x)$, $i=1,2,3$, denote the first-order, the second-order and the third-order asymptotics of the cdf and the pdf of $M_n$ under linear normalization, respectively. Similarly, let $L_i^{p}(x)$ and $U_i^{p}(x)$, $i=1,2,3$, denote
the first-order, the second-order and the third-order asymptotics of the cdf and the pdf
of $M_n$ under power normalization, respectively. Note that the second and the third-order asymptotics are related to the sample size $n$.

To compare the accuracy of actual values with its asymptotics, for fixed $x>0$ let
\begin{eqnarray*}
&&\Delta_i^{l}(x)=|F^n(a_nx)-L_i^{l}(x)|,\\
&&\delta_i^{l}(x)=|g_n(x)-U_i^{l}(x)|
\end{eqnarray*}
denote the absolute errors of the cdf and the pdf under two
normalization, where $i=1,2,3$. From Remark \ref{rem3.1} and \ref{rem3.2}, it follows
that $L_i^p(x)=L_{i}^{l}\left( x^{\frac{1}{v_1}} \right)$, $U_i^{p}(x)=\frac{x^{\frac{1}{v_1}-1}}{v_1}U_{i}^{l}\left( x^{\frac{1}{v_1}} \right)$
for $x>0$, where $i=1,2,3$. Then the absolute errors of the cdf and the pdf under power normalization are given by
\begin{eqnarray*}
&&\Delta_i^{p}(x)=|F^n(\alpha_nx^{\beta_n})-L_i^{p}(x)|=\Delta_i^{l}\left(x^{\frac{1}{v_1}}\right),\\
&&\delta_i^{p}(x)=\left|h_n(x)-U_i^{p}(x)\right|
=\frac{x^{\frac{1}{v_1}-1}}{v_1}\delta_i^{l}\left(x^{\frac{1}{v_1}}\right)
\end{eqnarray*}
for $x>0$, where $i=1,2,3$.
We use MATLAB to calculate the asymptotics and the actual values of the
cdf and the pdf of $M_n$ under two different normalization in the following two examples,
where Example 1 focuses on
the cdf of $M_n$, and Example 2 is related to the pdf of $M_n$.

{\bf Example 1.}
Let $X_1\sim \ST_2(1)$, $X_2\sim \ST_3(1.5)$, $X_3\sim \ST_4(2)$, and $T'$ is defined by
\begin{eqnarray}\label{exp1}
T'=\left\{
\begin{aligned}
&X_1, & \quad \P(T'=X_1)=0.5,& \\
&X_2, & \quad \P(T'=X_2)=0.3,& \\
&X_3, & \quad \P(T'=X_3)=0.2.& \\
\end{aligned}
\right.
\end{eqnarray}
Let $M_{n}=\max_{1\le k\le n}\{T_k\}$ with $T_{k}\overset{d}{=}T'$, $k=1,\dots,n$. From Theorem \ref{thm3.1} (iii) and Remark \ref{rem3.1}, we can get
the asymptotics of the cdf of $M_{n}$ as follows:
\begin{eqnarray}\label{as1}
\left\{
\begin{aligned}
&L_1^{l}(x)=\Phi_2(x),\\
&L_2^{l}(x)=\left(1-\frac{9C_3T_4(3)}{5\sqrt{2}C_2T_3(\sqrt{3})}x^{-3}a_n^{-1}\right)\Phi_2(x),\\
&L_3^{l}(x)=\left[1-\frac{9C_3T_4(3)}{5\sqrt{2}C_2T_3(\sqrt{3})}x^{-3}a_n^{-1}
-\left(  -\left(\frac{\sqrt{3}C_3}{8T_3(\sqrt{3})}+1.5 \right)x^{-4}-\frac{81}{100}\left( \frac{C_3T_4(3)}{C_2T_3(\sqrt{3})} \right)^2 x^{-6}  \right. \right. \\
&\quad \quad \quad \quad  \left. \left. +
\frac{T_3(\sqrt{3})}{4}x^{-4}+\frac{16C_4T_5(2\sqrt{5})}{5\sqrt{2}C_2T_3(\sqrt{3})}x^{-4}\right)a_n^{-2}\right]\Phi_2(x),\\
&L_i^{p}(x)=L_i^{l}(x^{\frac{1}{2}}), \quad i=1,2,3.
\end{aligned}
\right.
\end{eqnarray}

First we calculate the absolute errors of the cdf of $M_n$ at $x=2$ and $x=0.7$ for $n$ varying from $25$ to $1000$ with lattice $25$. For $x=2$ and $x=0.7$, numerical analysis results of $\Delta_i^l(x)$ and $\Delta_i^p(x)$, $i=1,2,3$, are documented in Tables \ref{table:1}-\ref{table:2}.
The two tables show that accuracies of all three kinds of asymptotics of the cdf
can be improved as $n$ becomes large.

In order to show the accuracy of all asymptotics more intuitive with
varying $n$ , we then plot the actual values and its asymptotics of the cdf
of $M_n$ with fixed $x$. With $x=2$, Figure \ref{fig:1} compares all asymptotics with the actual value under
two different normalization; while Figure \ref{fig:2} shows the case of $x=0.7$. Tables \ref{table:1}-\ref{table:2} and Figures \ref{fig:1}-\ref{fig:2} show the following facts:
i) For large $n$, the third-order asymptotics of the cdf of $M_n$ are closer to the actual values
under the two different normalization. ii) For large $n$, $ \Delta_3^{l}(2)$ is smaller than
$ \Delta_3^{p}(2)$, which shows that the third-order asymptotics of the cdf of $M_n$
at $x=2$ are more closer to its actual value under linear
normalization. iii) For large $n$, $\Delta_3^{l}(0.7)$
is larger than $\Delta_3^{p}(0.7)$, which shows that the third-order
asymptotic of the cdf of $M_n$ at $x=0.7$ are more closer to its actual value under power normalization.

{\bf Example 2.} Let $X_1\sim \ST_3(1)$, $X_2\sim \ST_6(2)$, $X_3\sim \ST_8(3)$, and $T''$ is defined by
\begin{eqnarray}\label{exp2}
T''=\left\{
\begin{aligned}
&X_1, & \quad \P(T''=X_1)=0.5,& \\
&X_2, & \quad \P(T''=X_2)=0.3,& \\
&X_3, & \quad \P(T''=X_3)=0.2.& \\
\end{aligned}
\right.
\end{eqnarray}
Let $M_{n}=\max_{1\le k\le n}\{T_k\}$ with $T_{k}\overset{d}{=}T''$, $k=1,\dots,n$. By using Theorem \ref{thm3.2} (ii) and Remark \ref{rem3.2}, the asymptotics of the pdf of $M_{n}$ are given
\begin{eqnarray}\label{as2}
\left\{
\begin{aligned}
&U_1^{l}(x)=\Phi_3^{'}(x),\\
&U_2^{l}(x)=\left(1+\left( -\frac{3(4T_4(2)+2^{-1.5}C_4)}{2T_4(2)}x^{-2}
+\left( \frac{9C_4}{20\sqrt{2}T_4(2)}+3.6\right)x^{-5}\right)a_n^{-2} \right)\Phi_3^{'}(x),\\
&U_3^{l}(x)=\left[1+\left( -\frac{3(4T_4(2)+2^{-1.5}C_4)}{2T_4(2)}x^{-2}
+\left( \frac{9C_4}{20\sqrt{2}T_4(2)}+3.6\right)x^{-5}\right)a_n^{-2} \right. \\
& \quad  \quad  \quad  \quad \left.
+\left((2x^{-3}-x^{-6})\frac{36\sqrt{6}C_6T_7(2\sqrt{7})}{5C_3T_4(2)}+\frac{\sqrt{3}}{\pi}T_{4}(2)(2-x^{-3})x^{-3}\right)a_n^{-3} \right]\Phi_3^{'}(x),\\
&U_i^{p}(x)=\frac{x^{-\frac{2}{3}}}{3}U^{l}(x^{\frac{1}{3}}), \quad i=1,2,3.
\end{aligned}
\right.
\end{eqnarray}

Here, we calculate the absolute errors of the pdf of $M_n$ at $x=3$  for $n$ varying from $25$ to $1000$ with lattice $25$,
and at $x=0.75$ for $n$ varying from $375$ to $15000$ with lattice $375$. Tables \ref{table:3}-\ref{table:4} document the numerical analysis results of $\delta_i^l(x)$ and $\delta_i^p(x)$, $i=1,2,3$, which show that
the accuracy of all three kinds of asymptotics of pdf
improve as $n$ becomes large.
Figures \ref{fig:3}-\ref{fig:4} compare all asymptotics with the
actual values under two different normalizations.
From Tables \ref{table:3}-\ref{table:4} and Figures \ref{fig:3}-\ref{fig:4}, we know that:
i) For large $n$, the third-order asymptotics of pdf of $M_n$ are closer to the actual values
under the two different normalization. ii) When $n$ is larger, $\delta_3^{l}(3)$ is smaller than
$\delta_3^{p}(3)$, which shows that the third-order asymptotic of the pdf of $M_n$  at $x=3$ are more closer to its actual value under linear
normalization. iii) For large $n$, $\delta_3^{l}(0.75)$
is larger than $\delta_3^{p}(0.75)$, which shows that the third-order
asymptotic of the pdf of $M_n$ at $x=0.75$ are more closer to its actual value under power normalization.

\begin{table}\centering
\caption{\c{Absolute errors between actual values and their asymptotics of the cdf at $x=2$}}\label{table:1}
\begin{tabular}{ccccccc}
\hline
$n$ & $\Delta_1^l(2)$ & $\Delta_1^p(2)$ & $\Delta_2^l(2)$ & $\Delta_{2}^{p}(2)$ & $\Delta_3^l(2)$ & $\Delta_3^p(2)$\\
\hline
25&	0.041618314&	0.083694396&	0.000428915&	0.007036842&	0.003937814&	 0.017246955\\
50&	0.029959447&	0.063092667&	0.000834143&	0.001064006&	0.001349221&	 0.006169063\\
75&	0.024516994&	0.052450469&	0.000736283&	0.000066765&	0.000719293&	 0.003336607\\
100&	0.021225874&	0.045767313&	0.000631175&	0.000401695&	0.000460507&	 0.002150834\\
125&	0.018967784&	0.041089738&	0.000547325&	0.000513495&	0.000326021&	 0.001528528\\
150&	0.017297315&	0.037586625&	0.000481813&	0.000545752&	0.000245975&	 0.001155933\\
175&	0.015998039&	0.034839136&	0.000429909&	0.000545952&	0.000193909&	 0.000912636\\
200&	0.014950642&	0.032610918&	0.000387990&	0.000532581&	0.000157851&	 0.000743683\\
225&	0.014083308&	0.030757365&	0.000353508&	0.000513619&	0.000131684&	 0.000620838\\
250&	0.013349908&	0.029184474&	0.000324676&	0.000492737&	0.000111997&	 0.000528274\\
275&	0.012719296&	0.027828177&	0.000300225&	0.000471680&	0.000096751&	 0.000456512\\
300&	0.012169587&	0.026643128&	0.000279231&	0.000451276&	0.000084663&	 0.000399567\\
325&	0.011684897&	0.025596216&	0.000261013&	0.000431899&	0.000074890&	 0.000353495\\
350&	0.011253383&	0.024662633&	0.000245053&	0.000413690&	0.000066856&	 0.000315604\\
375&	0.010866013&	0.023823378&	0.000230956&	0.000396674&	0.000060159&	 0.000284001\\
400&	0.010515764&	0.023063622&	0.000218414&	0.000380812&	0.000054507&	 0.000257320\\
425&	0.010197078&	0.022371599&	0.000207181&	0.000366042&	0.000049685&	 0.000234553\\
450&	0.009905498&	0.021737842&	0.000197063&	0.000352285&	0.000045533&	 0.000214944\\
475&	0.009637397&	0.021154635&	0.000187900&	0.000339462&	0.000041927&	 0.000197913\\
500&	0.009389793&	0.020615617&	0.000179563&	0.000327496&	0.000038773&	 0.000183010\\
525&	0.009160209&	0.020115494&	0.000171945&	0.000316314&	0.000035994&	 0.000169881\\
550&	0.008946565&	0.019649815&	0.000164956&	0.000305850&	0.000033532&	 0.000158246\\
575&	0.008747104&	0.019214812&	0.000158520&	0.000296041&	0.000031338&	 0.000147877\\
600&	0.008560326&	0.018807269&	0.000152575&	0.000286832&	0.000029372&	 0.000138589\\
625&	0.008384945&	0.018424420&	0.000147065&	0.000278173&	0.000027604&	 0.000130232\\
650&	0.008219851&	0.018063876&	0.000141945&	0.000270017&	0.000026006&	 0.000122680\\
675&	0.008064078&	0.017723558&	0.000137174&	0.000262324&	0.000024557&	 0.000115829\\
700&	0.007916783&	0.017401648&	0.000132718&	0.000255056&	0.000023237&	 0.000109591\\
725&	0.007777225&	0.017096549&	0.000128546&	0.000248181&	0.000022031&	 0.000103892\\
750&	0.007644753&	0.016806849&	0.000124632&	0.000241668&	0.000020926&	 0.000098670\\
775&	0.007518787&	0.016531300&	0.000120952&	0.000235489&	0.000019910&	 0.000093869\\
800&	0.007398812&	0.016268789&	0.000117486&	0.000229620&	0.000018974&	 0.000089446\\
825&	0.007284370&	0.016018321&	0.000114216&	0.000224039&	0.000018109&	 0.000085358\\
850&	0.007175049&	0.015779004&	0.000111125&	0.000218726&	0.000017308&	 0.000081572\\
875&	0.007070479&	0.015550039&	0.000108200&	0.000213660&	0.000016564&	 0.000078057\\
900&	0.006970326&	0.015330700&	0.000105426&	0.000208827&	0.000015872&	 0.000074788\\
925&	0.006874288&	0.015120333&	0.000102793&	0.000204210&	0.000015227&	 0.000071740\\
950&	0.006782093&	0.014918345&	0.000100290&	0.000199794&	0.000014624&	 0.000068893\\
975&	0.006693489&	0.014724194&	0.000097907&	0.000195569&	0.000014060&	 0.000066229\\
1000&	0.006608252&	0.014537388&	0.000095636&	0.000191520&	0.000013532&	 0.000063733\\
\hline
\end{tabular}
\end{table}

\begin{table}\centering
\caption{\c{Absolute errors between actual values and their asymptotics of the cdf  at $x=0.7$}}
\label{table:2}
\begin{tabular}{ccccccc}
\hline
$n$ & $\Delta_1^l(0.7)$ & $\Delta_1^p(0.7)$ & $\Delta_2^l(0.7)$ & $\Delta_{2}^{p}(0.7)$ & $\Delta_3^l(0.7)$ & $\Delta_3^p(0.7)$\\
\hline
25&	0.072769740&	0.103763098&	0.087495626&	0.069370486&	0.049301919&	 0.061652134\\
50&	0.069353107&	0.091767994&	0.043971619&	0.030655938&	0.024874766&	 0.026796762\\
75&	0.064006392&	0.081576059&	0.028522860&	0.018382662&	0.015791625&	 0.015809878\\
100&	0.059391318&	0.073922726&	0.020741365&	0.012644066&	0.011192938&	 0.010714478\\
125&	0.055557905&	0.068015416&	0.016114945&	0.009412276&	0.008476203&	 0.007868606\\
150&	0.052351600&	0.063304069&	0.013076462&	0.007377421&	0.006710844&	 0.006091029\\
175&	0.049632109&	0.059441782&	0.010942505&	0.005996562&	0.005486261&	 0.004893940\\
200&	0.047292939&	0.056204365&	0.009369424&	0.005007601&	0.004595211&	 0.004042807\\
225&	0.045255350&	0.053441320&	0.008166438&	0.004269875&	0.003922693&	 0.003412280\\
250&	0.043460780&	0.051047909&	0.007219578&	0.003701737&	0.003400208&	 0.002929902\\
275&	0.041865033&	0.048948898&	0.006456793&	0.003252842&	0.002984637&	 0.002551173\\
300&	0.040434206&	0.047088752&	0.005830420&	0.002890609&	0.002647611&	 0.002247413\\
325&	0.039141869&	0.045425507&	0.005307745&	0.002593110&	0.002369768&	 0.001999390\\
350&	0.037967110&	0.043926805&	0.004865610&	0.002345092&	0.002137488&	 0.001793781\\
375&	0.036893155&	0.042567252&	0.004487185&	0.002135647&	0.001940937&	 0.001621090\\
400&	0.035906379&	0.041326607&	0.004159962&	0.001956789&	0.001772856&	 0.001474392\\
425&	0.034995595&	0.040188516&	0.003874466&	0.001802546&	0.001627777&	 0.001348526\\
450&	0.034151520&	0.039139604&	0.003623389&	0.001668373&	0.001501516&	 0.001239576\\
475&	0.033366381&	0.038168814&	0.003401015&	0.001550754&	0.001390820&	 0.001144525\\
500&	0.032633615&	0.037266914&	0.003202810&	0.001446932&	0.001293125&	 0.001061014\\
525&	0.031947636&	0.036426131&	0.003025133&	0.001354715&	0.001206385&	 0.000987174\\
550&	0.031303659&	0.035639862&	0.002865031&	0.001272342&	0.001128954&	 0.000921508\\
575&	0.030697556&	0.034902463&	0.002720083&	0.001198385&	0.001059487&	 0.000862804\\
600&	0.030125744&	0.034209075&	0.002588286&	0.001131670&	0.000996882&	 0.000810072\\
625&	0.029585100&	0.033555488&	0.002467973&	0.001071229&	0.000940225&	 0.000762495\\
650&	0.029072883&	0.032938036&	0.002357741&	0.001016254&	0.000888752&	 0.000719394\\
675&	0.028586680&	0.032353506&	0.002256404&	0.000966067&	0.000841822&	 0.000680203\\
700&	0.028124353&	0.031799075&	0.002162954&	0.000920097&	0.000798893&	 0.000644441\\
725&	0.027684005&	0.031272245&	0.002076526&	0.000877855&	0.000759502&	 0.000611705\\
750&	0.027263942&	0.030770798&	0.001996376&	0.000838925&	0.000723253&	 0.000581647\\
775&	0.026862649&	0.030292760&	0.001921860&	0.000802949&	0.000689805&	 0.000553970\\
800&	0.026478763&	0.029836365&	0.001852419&	0.000769618&	0.000658865&	 0.000528419\\
825&	0.026111057&	0.029400026&	0.001787562&	0.000738662&	0.000630177&	 0.000504773\\
850&	0.025758421&	0.028982317&	0.001726862&	0.000709848&	0.000603518&	 0.000482837\\
875&	0.025419852&	0.028581948&	0.001669939&	0.000682969&	0.000578690&	 0.000462445\\
900&	0.025094433&	0.028197751&	0.001616461&	0.000657847&	0.000555525&	 0.000443448\\
925&	0.024781333&	0.027828666&	0.001566130&	0.000634320&	0.000533868&	 0.000425716\\
950&	0.024479791&	0.027473727&	0.001518684&	0.000612249&	0.000513586&	 0.000409135\\
975&	0.024189111&	0.027132053&	0.001473886&	0.000591509&	0.000494560&	 0.000393602\\
1000&	0.023908653&	0.026802837&	0.001431526&	0.000571986&	0.000476683&	 0.000379028\\
\hline
\end{tabular}
\end{table}

\begin{table}\centering
\caption{\c{Absolute errors between actual values and their asymptotics of the pdf at $x=3$}}
\label{table:3}
\begin{tabular}{ccccccc}
\hline
$n$ & $\delta_1^l(3)$ & $\delta_1^p(3)$ & $\delta_2^l(3)$ & $\delta_2^{p}(3)$ & $\delta_3^l(3)$ & $\delta_3^p(3)$\\
\hline
25&	0.0008041273&	0.0006725280&	0.0019423949&	0.0223613528&	0.0000566926&	 0.0117149407\\
50&	0.0007311965&	0.0000831380&	0.0009990041&	0.0135799655&	0.0000005396&	 0.0034581813\\
75&	0.0006485578&	0.0006433885&	0.0006718323&	0.0097835109&	0.0000054698&	 0.0015752536\\
100&	0.0005840952&	0.0009576983&	0.0005058628&	0.0076495175&	0.0000060910&	 0.0008695559\\
125&	0.0005337537&	0.0011375344&	0.0004055436&	0.0062799394&	0.0000057262&	 0.0005353193\\
150&	0.0004934233&	0.0012425557&	0.0003383704&	0.0053259793&	0.0000051892&	 0.0003534029\\
175&	0.0004603054&	0.0013037635&	0.0002902528&	0.0046232692&	0.0000046689&	 0.0002447727\\
200&	0.0004325366&	0.0013381242&	0.0002540939&	0.0040840820&	0.0000042080&	 0.0001754547\\
225&	0.0004088465&	0.0013554372&	0.0002259309&	0.0036572932&	0.0000038100&	 0.0001289616\\
250&	0.0003883432&	0.0013616312&	0.0002033771&	0.0033110845&	0.0000034683&	 0.0000965449\\
275&	0.0003703823&	0.0013604338&	0.0001849095&	0.0030246130&	0.0000031743&	 0.0000732319\\
300&	0.0003544865&	0.0013542639&	0.0001695107&	0.0027836538&	0.0000029201&	 0.0000560373\\
325&	0.0003402938&	0.0013447330&	0.0001564749&	0.0025781659&	0.0000026989&	 0.0000430875\\
350&	0.0003275248&	0.0013329376&	0.0001452973&	0.0024008590&	0.0000025053&	 0.0000331620\\
375&	0.0003159599&	0.0013196371&	0.0001356072&	0.0022463123&	0.0000023347&	 0.0000254406\\
400&	0.0003054237&	0.0013053648&	0.0001271265&	0.0021104110&	0.0000021835&	 0.0000193573\\
425&	0.0002957744&	0.0012904994&	0.0001196421&	0.0019899757&	0.0000020488&	 0.0000145122\\
450&	0.0002868962&	0.0012753118&	0.0001129885&	0.0018825105&	0.0000019281&	 0.0000106170\\
475&	0.0002786930&	0.0012599970&	0.0001070346&	0.0017860291&	0.0000018194&	 0.0000074600\\
500&	0.0002710848&	0.0012446954&	0.0001016756&	0.0016989310&	0.0000017212&	 0.0000048837\\
525&	0.0002640041&	0.0012295082&	0.0000968267&	0.0016199119&	0.0000016320&	 0.0000027687\\
550&	0.0002573936&	0.0012145077&	0.0000924184&	0.0015478987&	0.0000015508&	 0.0000010237\\
575&	0.0002512043&	0.0011997449&	0.0000883933&	0.0014820000&	0.0000014764&	 0.0000004220\\
600&	0.0002453940&	0.0011852554&	0.0000847035&	0.0014214695&	0.0000014082&	 0.0000016239\\
625&	0.0002399263&	0.0011710628&	0.0000813089&	0.0013656775&	0.0000013454&	 0.0000026258\\
650&	0.0002347692&	0.0011571824&	0.0000781755&	0.0013140891&	0.0000012875&	 0.0000034624\\
675&	0.0002298950&	0.0011436225&	0.0000752741&	0.0012662469&	0.0000012338&	 0.0000041620\\
700&	0.0002252792&	0.0011303870&	0.0000725800&	0.0012217575&	0.0000011840&	 0.0000047470\\
725&	0.0002209002&	0.0011174758&	0.0000700718&	0.0011802808&	0.0000011377&	 0.0000052362\\
750&	0.0002167386&	0.0011048864&	0.0000677308&	0.0011415210&	0.0000010946&	 0.0000056445\\
775&	0.0002127775&	0.0010926142&	0.0000655409&	0.0011052198&	0.0000010542&	 0.0000059846\\
800&	0.0002090015&	0.0010806531&	0.0000634880&	0.0010711509&	0.0000010165&	 0.0000062667\\
825&	0.0002053970&	0.0010689961&	0.0000615595&	0.0010391145&	0.0000009811&	 0.0000064996\\
850&	0.0002019515&	0.0010576355&	0.0000597445&	0.0010089343&	0.0000009478&	 0.0000066904\\
875&	0.0001986540&	0.0010465630&	0.0000580333&	0.0009804536&	0.0000009166&	 0.0000068452\\
900&	0.0001954943&	0.0010357704&	0.0000564173&	0.0009535330&	0.0000008871&	 0.0000069693\\
925&	0.0001924633&	0.0010252489&	0.0000548886&	0.0009280477&	0.0000008592&	 0.0000070668\\
950&	0.0001895527&	0.0010149901&	0.0000534405&	0.0009038861&	0.0000008329&	 0.0000071416\\
975&	0.0001867548&	0.0010049853&	0.0000520667&	0.0008809478&	0.0000008080&	 0.0000071967\\
1000&	0.0001840628&	0.0009952262&	0.0000507616&	0.0008591422&	0.0000007844&	 0.0000072349\\
\hline
\end{tabular}
\end{table}
\begin{table}\centering
\caption{\c{Absolute errors between actual values and their asymptotics of the pdf at $x=0.75$}}
\label{table:4}
\begin{tabular}{ccccccc}
\hline
$n$ & $\delta_1^l(0.75)$ & $\delta_1^p(0.75)$ & $\delta_2^l(0.75)$ & $\delta_2^{p}(0.75)$ & $\delta_3^l(0.75)$ & $\delta_3^p(0.75)$\\
\hline
375&	0.029225833&	0.004206893&	0.048225582&	0.017429407&	 0.008274292&	 0.003965312\\
750&	0.026010161&	0.001357781&	0.022781173&	0.009687443&	 0.002805528&	 0.001009916\\
1125&	0.022436305&	0.000326059&	0.014798452&	0.006682780&	 0.001481355&	 0.000448792\\
1500&	0.019788382&	0.000153010&	0.010948233&	0.005094348&	 0.000960410&	 0.000254332\\
1875&	0.017800136&	0.000408380&	0.008687875&	0.004113656&	 0.000697617&	 0.000165288\\
2250&	0.016254384&	0.000555948&	0.007202043&	0.003448535&	 0.000543494&	 0.000117251\\
2625&	0.015014707&	0.000645367&	0.006150896&	0.002968027&	 0.000443569&	 0.000088361\\
3000&	0.013994922&	0.000700876&	0.005367932&	0.002604753&	 0.000374021&	 0.000069587\\
3375&	0.013138561&	0.000735459&	0.004762043&	0.002320534&	 0.000323010&	 0.000056657\\
3750&	0.012407202&	0.000756573&	0.004279199&	0.002092132&	 0.000284070&	 0.000047340\\
4125&	0.011773795&	0.000768735&	0.003885333&	0.001904593&	 0.000253397&	 0.000040382\\
4500&	0.011218727&	0.000774802&	0.003557896&	0.001747864&	 0.000228621&	 0.000035029\\
4875&	0.010727415&	0.000776643&	0.003281370&	0.001614938&	 0.000208194&	 0.000030810\\
5250&	0.010288769&	0.000775517&	0.003044726&	0.001500779&	 0.000191062&	 0.000027415\\
5625&	0.009894201&	0.000772289&	0.002839907&	0.001401679&	 0.000176487&	 0.000024636\\
6000&	0.009536942&	0.000767570&	0.002660891&	0.001314845&	 0.000163936&	 0.000022325\\
6375&	0.009211582&	0.000761796&	0.002503088&	0.001238134&	 0.000153013&	 0.000020379\\
6750&	0.008913739&	0.000755280&	0.002362935&	0.001169875&	 0.000143419&	 0.000018720\\
7125&	0.008639821&	0.000748254&	0.002237626&	0.001108745&	 0.000134926&	 0.000017293\\
7500&	0.008386855&	0.000740888&	0.002124919&	0.001053683&	 0.000127354&	 0.000016053\\
7875&	0.008152358&	0.000733310&	0.002023003&	0.001003829&	 0.000120561&	 0.000014967\\
8250&	0.007934233&	0.000725614&	0.001930400&	0.000958477&	 0.000114432&	 0.000014010\\
8625&	0.007730701&	0.000717871&	0.001845886&	0.000917045&	 0.000108874&	 0.000013160\\
9000&	0.007540241&	0.000710134&	0.001768448&	0.000879046&	 0.000103811&	 0.000012400\\
9375&	0.007361541&	0.000702444&	0.001697231&	0.000844070&	 0.000099180&	 0.000011719\\
9750&	0.007193466&	0.000694831&	0.001631515&	0.000811771&	 0.000094927&	 0.000011103\\
10125&	0.007035027&	0.000687317&	0.001570686&	0.000781852&	 0.000091009&	 0.000010545\\
10500&	0.006885357&	0.000679917&	0.001514218&	0.000754059&	 0.000087387&	 0.000010038\\
10875&	0.006743695&	0.000672644&	0.001461660&	0.000728175&	 0.000084029&	 0.000009574\\
11250&	0.006609368&	0.000665505&	0.001412617&	0.000704009&	 0.000080907&	 0.000009148\\
11625&	0.006481779&	0.000658506&	0.001366749&	0.000681396&	 0.000077998&	 0.000008757\\
12000&	0.006360395&	0.000651650&	0.001323758&	0.000660190&	 0.000075280&	 0.000008395\\
12375&	0.006244742&	0.000644938&	0.001283381&	0.000640264&	 0.000072736&	 0.000008061\\
12750&	0.006134393&	0.000638371&	0.001245387&	0.000621506&	 0.000070349&	 0.000007750\\
13125&	0.006028964&	0.000631947&	0.001209571&	0.000603816&	 0.000068105&	 0.000007462\\
13500&	0.005928109&	0.000625666&	0.001175750&	0.000587105&	 0.000065992&	 0.000007192\\
13875&	0.005831515&	0.000619525&	0.001143764&	0.000571295&	 0.000063999&	 0.000006941\\
14250&	0.005738896&	0.000613522&	0.001113466&	0.000556314&	 0.000062116&	 0.000006705\\
14625&	0.005649995&	0.000607653&	0.001084726&	0.000542099&	 0.000060334&	 0.000006484\\
15000&	0.005564575&	0.000601917&	0.001057428&	0.000528592&	 0.000058646&	 0.000006276\\
\hline
\end{tabular}
\end{table}

\begin{figure}[H]
\centering
\begin{minipage}[t]{0.5\textwidth}
\centering
\includegraphics[height = 6cm, width = 8 cm]{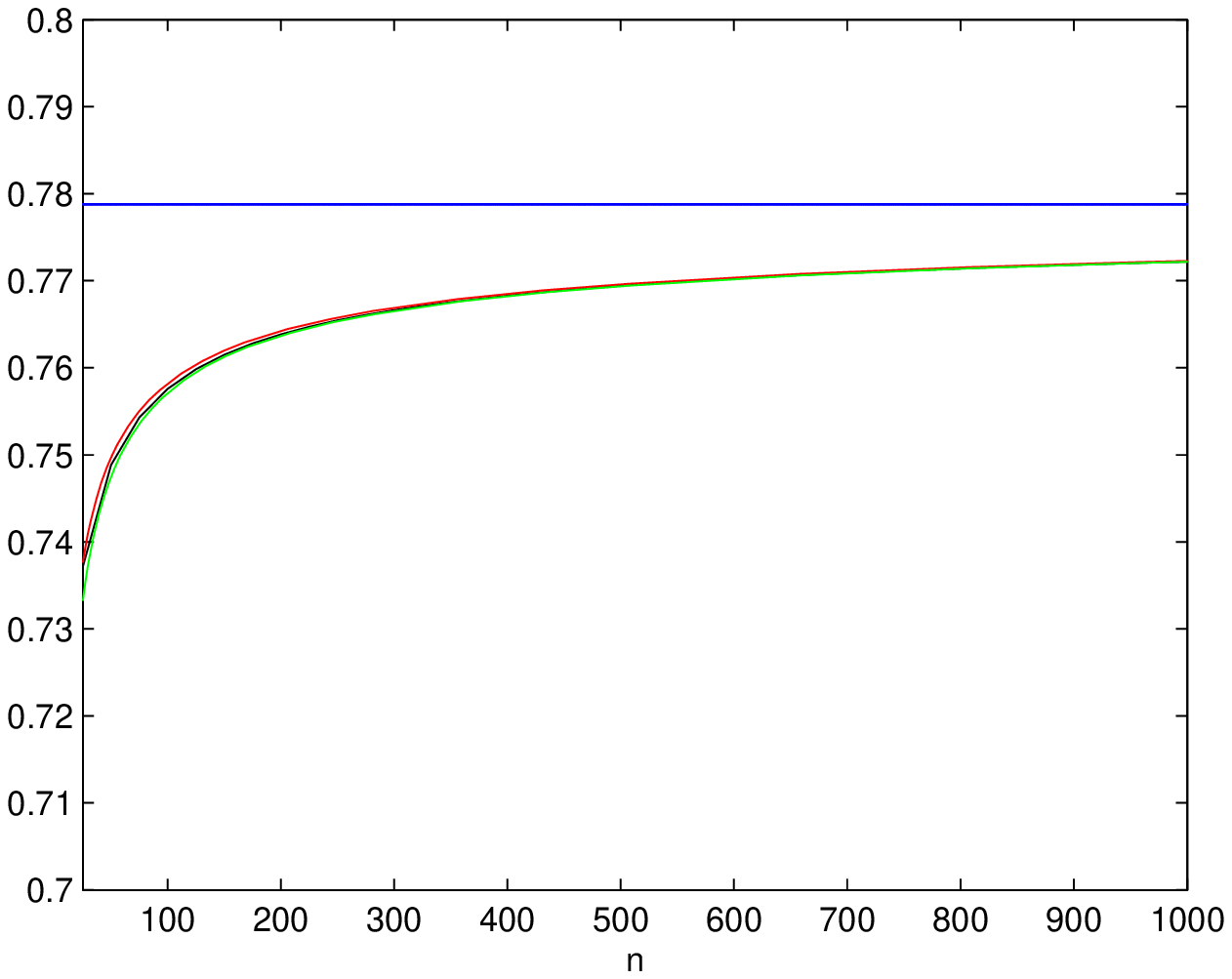}
{(a) under linear normalization}
\end{minipage}
\begin{minipage}[t]{0.45\textwidth}
\centering
\includegraphics[height = 6cm, width = 8 cm]{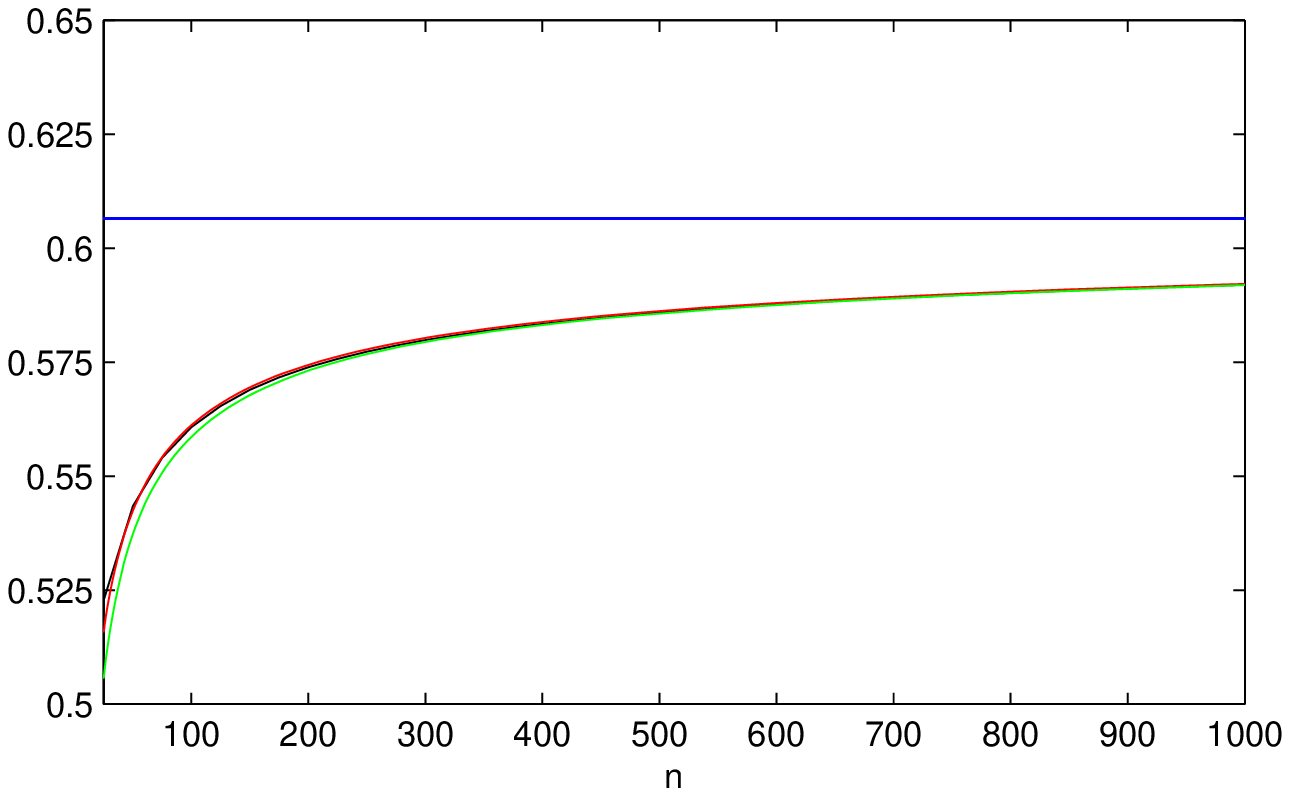}
{(b) under power normalization}
\end{minipage}
\caption{Actual values and its approximations of the cdf of $M_n$ with $x=2$ under linear and power normalization, where
the $\MSTD$ is given by \eqref{exp1}, and all asymptotics are given by \eqref{as1}. The actual
values drawn in black, the first-order asymptotics drawn in blue, the second-order asymptotics
drawn in red and the third-order asymptotics drawn in green.
}
\label{fig:1}
\end{figure}

\begin{figure}[H]\label{fig:2}
\centering
\begin{minipage}[t]{0.5\textwidth}
\centering
\includegraphics[height = 6cm, width = 8 cm]{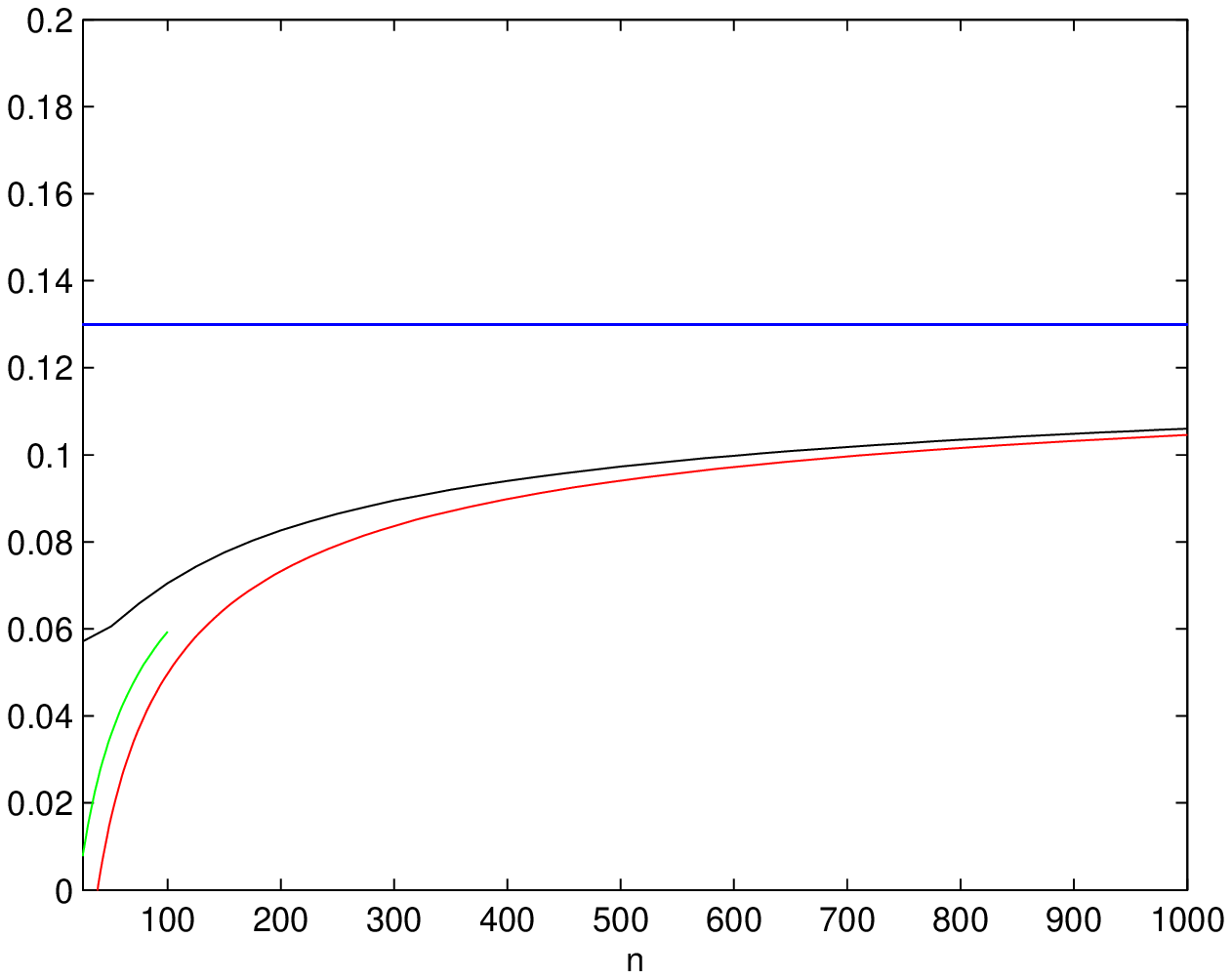}
{(a) under linear normalization}
\end{minipage}
\begin{minipage}[t]{0.45\textwidth}
\centering
\includegraphics[height = 6cm, width = 8 cm]{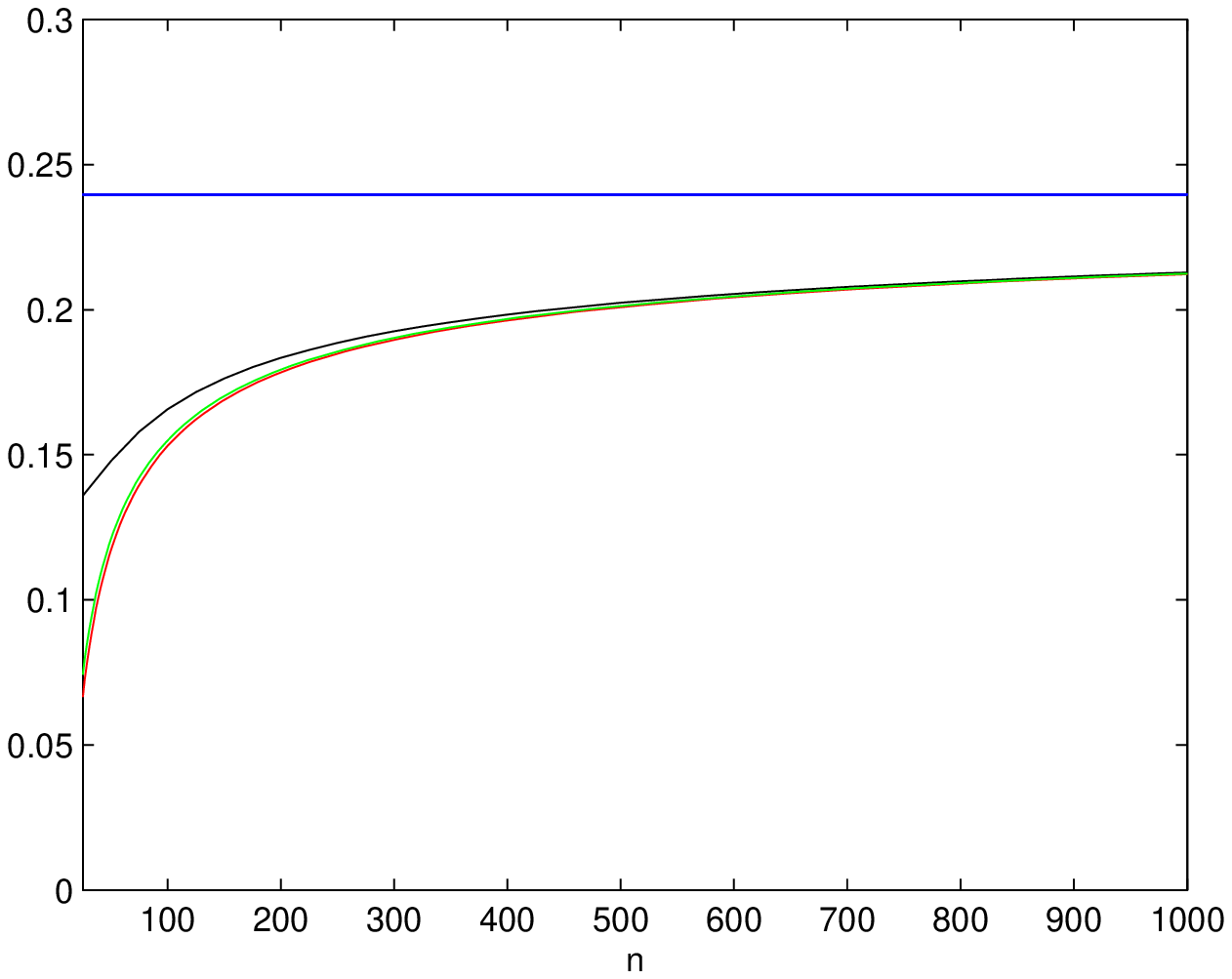}
{(b) under power normalization}
\end{minipage}
\caption{Actual values and its approximations of the cdf of $M_n$ with $x=0.7$ under linear and power normalization, where
the $\MSTD$ is given by \eqref{exp1}, and all asymptotics are given by \eqref{as1}. The actual
values drawn in black, the first-order asymptotics drawn in blue, the second-order asymptotics
drawn in red and the third-order asymptotics drawn in green. }
\label{fig:2}
\end{figure}

\begin{figure}[H]\label{fig:3}
\centering
\begin{minipage}[t]{0.5\textwidth}
\centering
\includegraphics[height = 6cm, width = 8 cm]{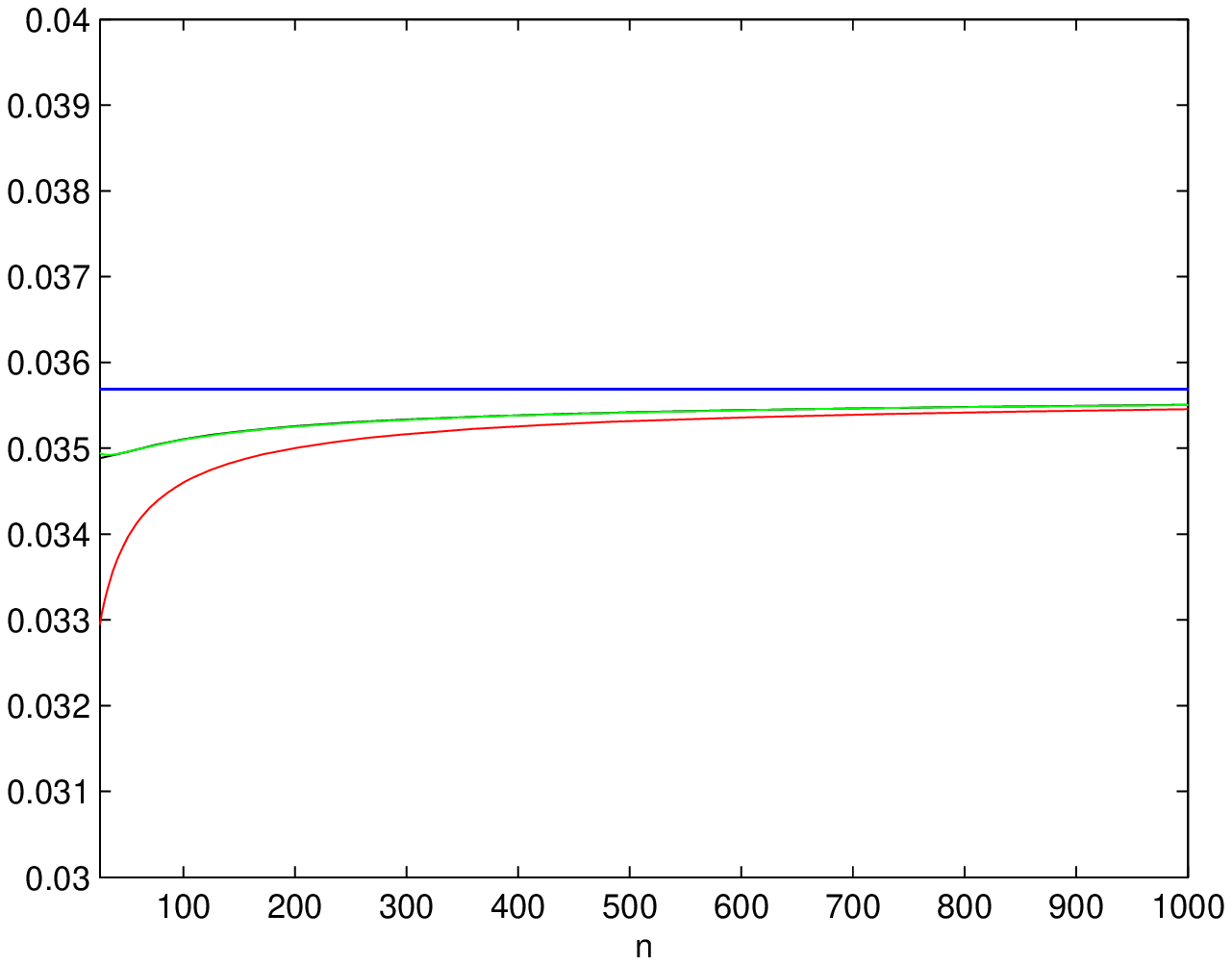}
{(a) under linear normalization}
\end{minipage}
\begin{minipage}[t]{0.45\textwidth}
\centering
\includegraphics[height = 6cm, width = 8 cm]{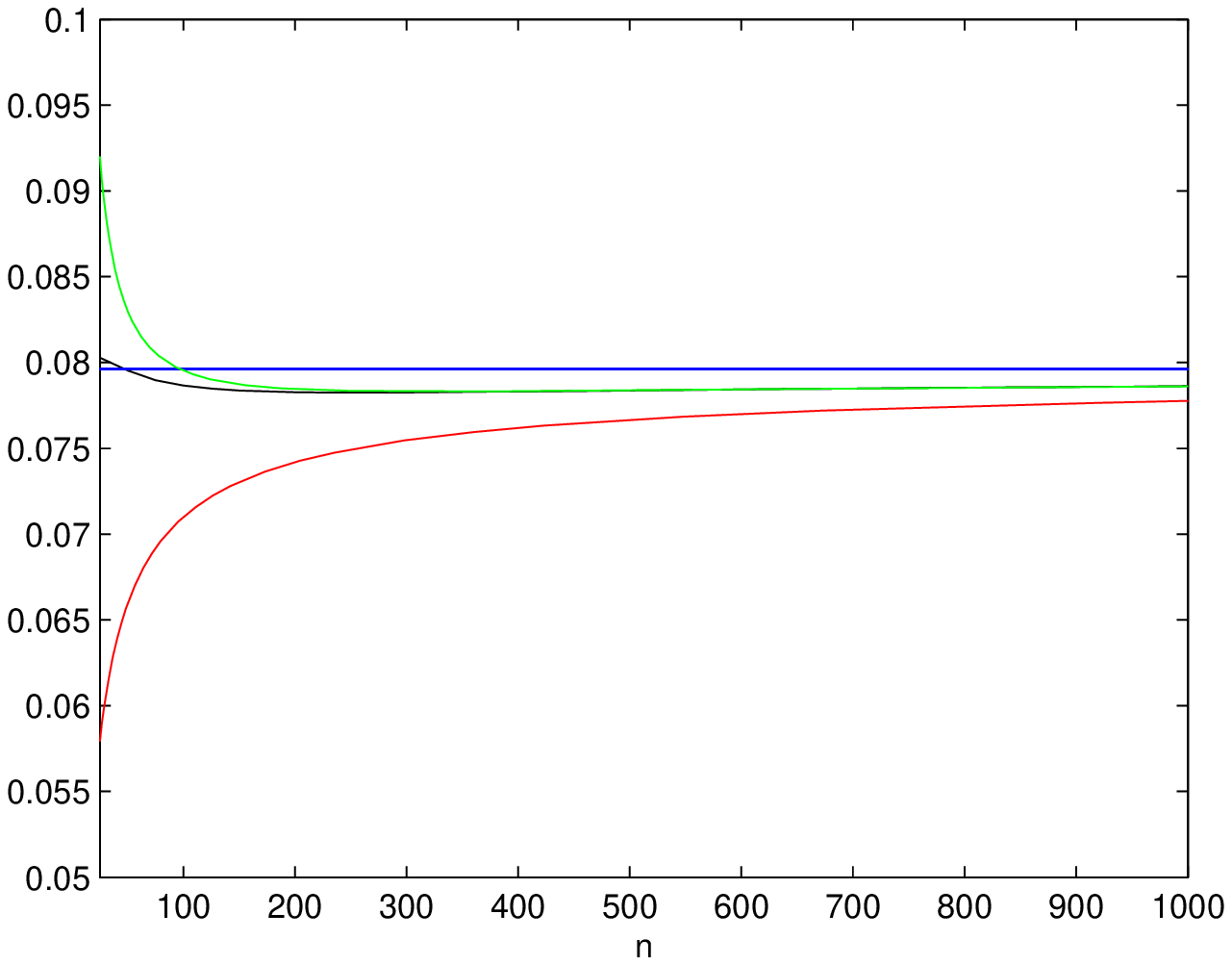}
{(b) under power normalization}
\end{minipage}
\caption{Actual values and its approximations of the pdf of $M_n$ with $x=3$ under linear and power normalization, where
the $\MSTD$ is given by \eqref{exp2}, and all asymptotics are given by \eqref{as2}. The actual
values drawn in black, the first-order asymptotics drawn in blue, the second-order asymptotics
drawn in red and the third-order asymptotics drawn in green. }
\label{fig:3}
\end{figure}

\begin{figure}[H]\label{fig:4}
\centering
\begin{minipage}[t]{0.5\textwidth}
\centering
\includegraphics[height = 6cm, width = 8 cm]{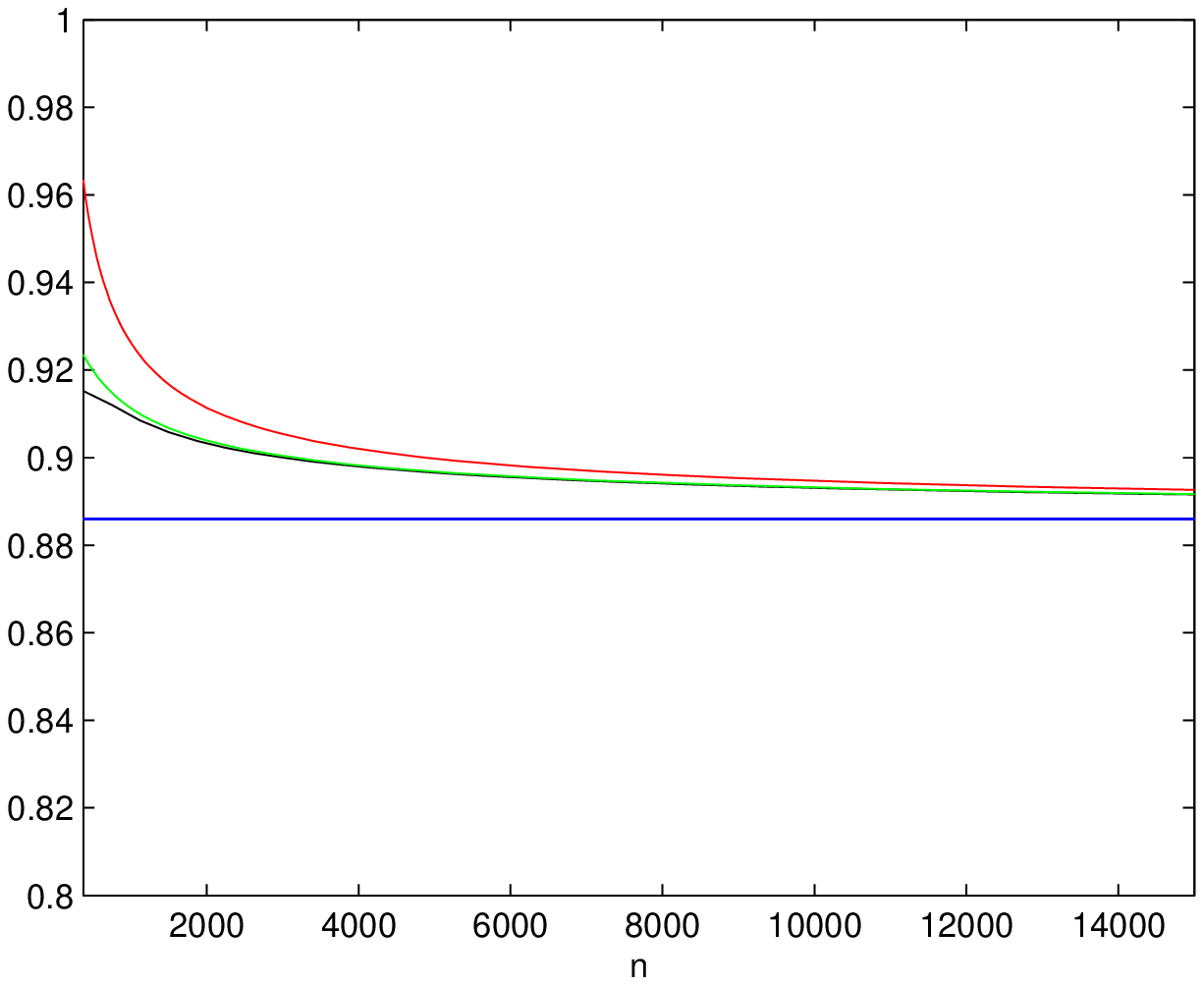}
{(a) under linear normalization}
\end{minipage}
\begin{minipage}[t]{0.45\textwidth}
\centering
\includegraphics[height = 6cm, width = 8 cm]{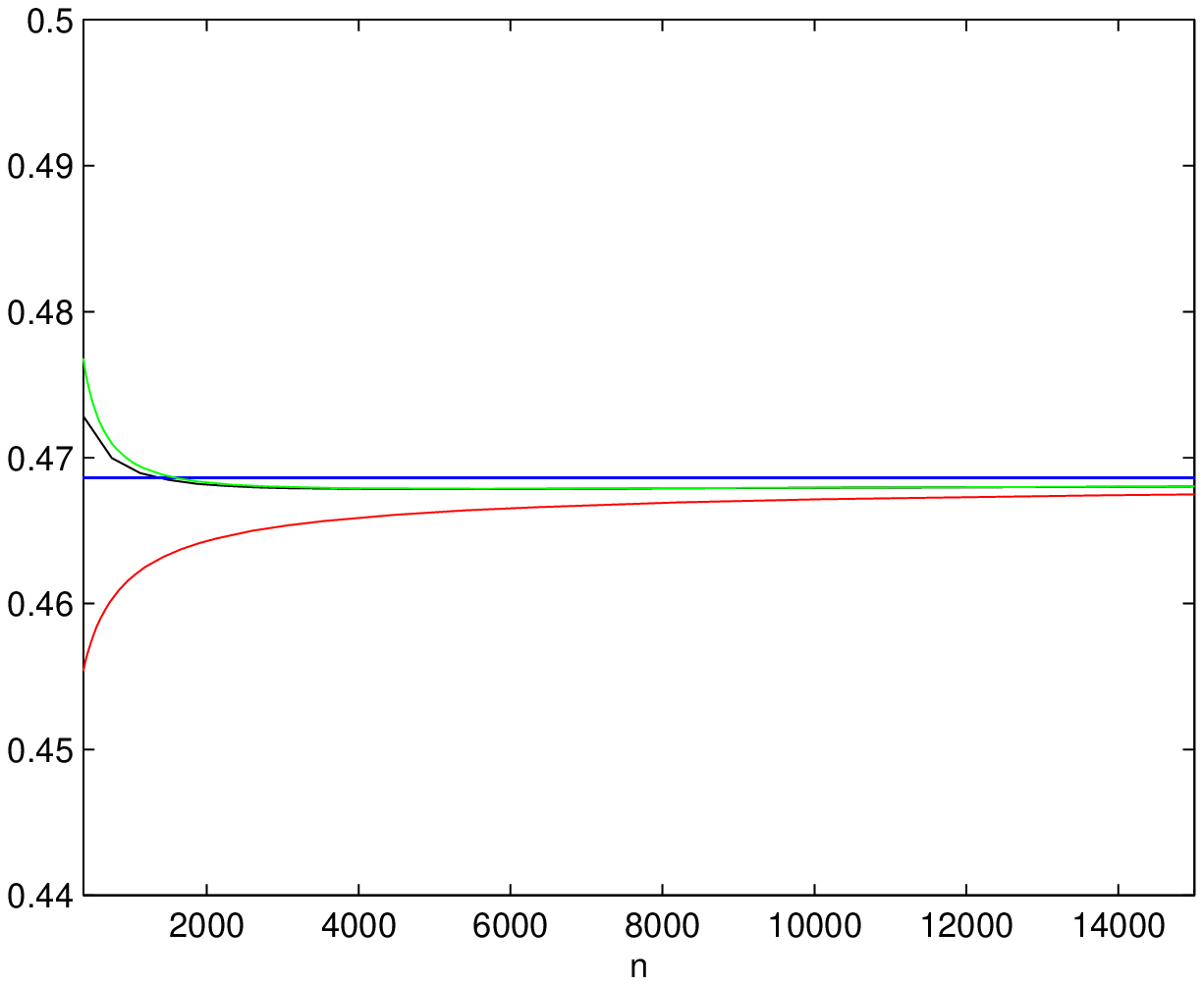}
{(b) under power normalization}
\end{minipage}
\caption{Actual values and its approximations of the pdf of $M_n$ with $x=0.75$ under linear and power normalization, where
the $\MSTD$ is given by \eqref{exp2}, and all asymptotics are given by \eqref{as2}. The actual
values drawn in black, the first-order asymptotics drawn in blue, the second-order asymptotics
drawn in red and the third-order asymptotics drawn in green. }
\label{fig:4}
\end{figure}

\noindent
{\bf Acknowledgements}~~This work was supported by the National Natural Science Foundation
of China Grant No. 11171275, and the Doctoral Grant of University of Shanghai for Science and Technology (BSQD201608).

\end{document}